\let\le\leqslant
\let\ge\geqslant
\def\cub{{\scriptscriptstyle{\boxempty}}}
\def\id{\textrm{id}}
\def\xto#1{\xrightarrow{#1}}
\def\restr#1#2{\left.#1\right|_{#2}}
\newcommand{\nc}{\newcommand}
\nc{\sA}{{\mathscr A}}
\nc{\wt}{\widetilde} \nc{\bl}{\bullet} \nc{\al}{\alpha}
\nc{\sg}{\sigma} \nc{\vf}{\varphi} \nc{\om}{\omega}
\nc{\ve}{\varepsilon} \nc{\ol}{\overline} \nc{\lb}{\lambda}
\nc{\Lb}{\Lambda} \nc{\Gm}{\Gamma} \nc{\cP}{{\mathscr P}}
\nc{\sB}{{\mathscr B}}
\nc{\ul}{\underline} \nc{\os}{\overset} \nc{\us}{\underset}
\nc{\pa}{\partial} \nc{\wh}{\widehat} \nc{\sbs}{\subset}
\nc{\lra}{\longrightarrow} \nc{\all}{\allowdisplaybreaks}
\nc{\Ker}{\operatorname{Ker}} \nc{\Img}{\operatorname{Im}}
\nc{\Kan}{\operatorname{Kan}} \nc{\Hom}{\operatorname{Hom}}
\nc{\Imm}{\operatorname{Im}}
\newtheorem{theo}{Theorem}[section]
\newtheorem{prop}[theo]{Proposition}
\newtheorem{lem}[theo]{Lemma}
\newtheorem{coro}[theo]{Corollary}
\theoremstyle{definition}
\newtheorem{defi}[theo]{Definition}
\newtheorem{exmp}[theo]{Example}
\newtheorem{remk}[theo]{Remark}
\begin{document}

%\advance\baselineskip by.3\baselineskip

\title{Cubical Resolutions and Derived Functors}

\author{Irakli Patchkoria}

\begin{abstract}
We introduce  pseudocubical objects  with  pseudoconnections  in an arbitrary category, obtained from
the  Brown-Higgins  structure  of  a  cubical  object  with connections  by  suitably  relaxing  their
identities,  and construct a cubical analog of  the  Tierney-Vogel  theory of  simplicial  derived
functors.  The crucial point in the construction is that projective  precubical  resolutions  which
are naturally used to define our cubical derived  functors  possess  pseudodegeneracies and pseudoconnections.
The same fact  is  essentially used for proving that  in the case of an additive functor between abelian categories,
our theory  coincides with the classical relative theory  of derived  functors  by Eilenberg-Moore.
\end{abstract}

\subjclass[2000]{18G10, 18E25, 18G30, 18G25, 18G55, 55N10}

\keywords{cubical object, cubical object with connections, derived functors,
Moore chain complex,  precubical homotopy, projective class,
projective precubical resolution, pseudocubical object with
pseudoconnections, simplicial object}

\maketitle

\section*{Introduction}

In [18] Tierney and Vogel for any functor $T:\sA\to\sB$, where $\sA$ is a category with finite limits and with a projective class $\cP$, and $\sB$ is an abelian category, have constructed derived functors and investigated relationships of their theory with other theories of derived functors. Namely, they have shown that if $\sA$ is abelian and $T$ is additive, then their theory coincides with the classical relative theory of Eilenberg-Moore [7], whereas if $\sA$ is abelian and $T$ is an arbitrary functor, then it gives a generalization of the theory of Dold-Puppe [6]. Besides, they showed that their derived functors are naturally isomorphic to the cotriple derived functors of Barr-Beck [3] if there is a cotriple in $\sA$ that realizes the given projective class $\cP$.

The key point in the construction of the derived functors by
Tierney and Vogel is that, using $\cP$-projective objects and
simplicial kernels, for every $A$ from $\sA$ a $\cP$-projective
resolution can be constructed, which is an $A$-augmented
pseudosimplicial object in $\sA$ and which for a given $A$ is
unique up to a presimplicial homotopy (according to the comparison
theorem for $\cP$-projective resolutions).

A natural question arises about constructing a cubical analog of
the theory of Tierney and Vogel. Exactly this is the purpose of
this paper.

Before turning to the content of the paper itself, let us say few
words about cubical objects and techniques. As it is well known,
simplicial methods are developed for long time and are
successfully used in algebra and topology. With less success, but
still also cubical techniques have been developed, which was
initiated on one hand by the systematic use of singular cubes in
the singular homology theory of topological spaces (see, for
example, [14]), and, on the other hand, by the papers of Kan [12,
13] which have related cubical sets to homotopy theory. Further
research (see e.~g. [1,2,4,5,8-11,16,17,19]) has also shown that
the cubical approach is interesting and important. Cubical objects
have a number of advantages compared to the simplicial ones. For
example, a (pre)cubical homotopy is given by a single morphism in
each dimension, whereas a (pre)simplicial homotopy requires
``many'' morphisms. On the other hand, significant disadvantages
of the cubical technique with respect to the simplicial one are
also apparent. For example, a cubical group, in fact even a
cubical abelian group, can fail to satisfy the Kan condition
[16,17]. In the context of elimination of these and other
disadvantages of the cubical theory, of extreme importance are
cubical objects with connections introduced by Brown and Higgins
[4]. These objects are cubical objects with extra degeneracies,
called connections. To stress importance and naturality of cubical
objects with connections it suffices to name e.~g. the following
three facts. The singular cubical complex of any topological space
has naturally defined connections. Next, Tonks in [19] has shown
that any cubical group with connections satisfies the Kan
condition. Finally, Brown and Higgins [5] have recently proved
that the category of cubical objects with connections in an
abelian category is equivalent to the category of nonnegative
chain complexes in the same category.

In this paper we introduce pseudocubical objects with
pseudoconnections, obtained from the Brown-Higgins structure of a
cubical object with connections by suitably relaxing their
identities. Projective precubical resolutions which we are using
to construct cubical derived functors possess pseudodegeneracies
and pseudoconnections and this fact is essentially used in the
construction of derived functors defined by us.

Now let us list the contents of the paper by sections.

In Section \ref{prelim} we recall the notions of presimplicial, pseudosimplicial and simplicial objects, precubical, cubical objects and cubical objects with connections, their morphisms, and the respective augmented versions of these notions. We also recall the definitions of normalization functors in the simplicial setting by Moore, and in the cubical setting by \'Swi\k atek [16]. Furthermore we recall the Kan cubical sets and their homotopy groups.

For any presimplicial object $S$ in an abelian category $\sA$ the
normalized chain complex of $S$, denoted by $I(S)$ in this paper,
is a chain subcomplex of the unnormalized chain complex of $S$,
denoted by $J(S)$. The well known Moore theorem says that if $S$
is a simplicial object in $\sA$, then this inclusion is a chain
homotopy equivalence (in fact this is valid for any
pseudosimplicial object in $\sA$). On the other hand, for any
precubical object $X$ in an abelian category $\sA$, one has the
canonical inclusion $i_X:M(X)\hookrightarrow N(X)$ of chain
complexes $M(X)$ and $N(X)$ in $\sA$ which are cubical analogs of
$I(S)$ and $J(S)$ (see [16, 17]) respectively, and which
functorially depend on $X$. In Section 2 we introduce
pseudocubical objects with pseudoconnections and prove that $i_X$
is a chain homotopy equivalence for any pseudocubical object $X$
with pseudoconnections (in particular for any cubical object $X$
with connections). Thus we obtain a cubical analog of the Moore
theorem. Then we establish some results which are not needed later
on; however they seem interesting by themselves. Let us mention
some of them. We indicate an alternative proof of the
aforementioned Brown-Higgins equivalence. Furthermore, we point
out that this equivalence is realized by the above functor $M$.
Next, we show that if $G$ is a Kan cubical group, then $\pi_n(G)$,
the $n$-th homotopy group of $G$, coincides with $H_n(M(G))$, the
$n$-th homology group of $M(G)$ (note that $M(G)$ and $H_n(M(G))$
are defined for any cubical group $G$ as well). Using this and the
cubical analog of the Moore theorem together with the
aforementioned result of Tonks, we get that $\pi_n(G)$ is
naturally isomorphic to $H_n(N(G))$ for any cubical abelian group
$G$ with connections.

Let $\sA$ be a category with finite limits and a projective class
$\cP$, $\sB$ an abelian category, and $T:\sA\to\sB$ an arbitrary
(covariant) finctor. In Section 3 we construct left cubical
derived functors $L_n^\cub T$, $\wt L_n^\cub T:\sA\to\sB$,
$n\ge0$, as follows. First we show how to build for any object $A$
from $\sA$ an $A$-augmented $\cP$-projective precubical
resolution, denoted $P\xto\pa A$, by means of cubical kernels and
$\cP$-projective objects. Then we prove the comparison theorem
which in particular says that $P\xto\pa A$ is unique up to
precubical homotopy for a given $A$. Furthermore, it is shown that
any $\cP$-projective precubical resolution $P\xto\pa A$ is in fact
an augmented pseudocubical object with pseudoconnections. We
define $L_n^\cub T(A)$, $n\ge0$, to be the $n$-th homology of
$N(T(P))$. Besides the comparison theorem, the fact that $P$ has
pseudodegeneracies is crucial in proving that $L_n^\cub T(A)$ are
well-defined and functorially depend on $A$. This contrasts with the construction of the derived functors by Tierney
and Vogel which does not use existence of pseudodegeneracies in
$\cP$-projective presimplicial resolutions (pseudodegeneracies of
$\cP$-projective presimplicial resolutions are essentially used
when the theory of Tierney and Vogel is compared with other
theories of derived functors). Further, we define $\wt L_n^\cub
T(A)$, $n\ge0$, to be the $n$-th homology group of $M(T(P))$. Now
this construction essentially uses the fact that $P$ is a
pseudocubical object with pseudoconnections, i.~e., this is
crucial for proving that $\wt L_n^\cub T(A)$ are well-defined and
functorial in $A$. The cubical analog of the Moore theorem proved
in the previous section shows that in fact there are isomorphisms
$L_n^\cub T(A)\cong\wt L_n^\cub T(A)$, $n\ge0$, which are natural
in $A$ and $T$.

Suppose $\sA$ is an abelian category with a projective class $\cP$, $\sB$ an abelian category, and $T:\sA\to\sB$ an additive (covariant) functor. Then one constructs, with respect to $\cP$, the left derived functors $L_nT:\sA\to\sB$ ($n\ge0$) of $T$ in the sense of Eilenberg-Moore [7]. On the other hand, since any abelian category admits finite limits, we can build $\cP$-projective precubical resolutions, and therefore can construct the cubical left derived functors $L_n^\cub T:\sA\to\sB$, $n\ge0$. In Section 4, using once again the cubical analog of the Moore theorem, we prove that if $\cP$ is closed [7] or, more generally, is closed with respect to retracts, then there are isomorphisms $L_n^\cub T(A)\cong L_n T(A)$, $A\in\sA$, $n\ge0$, which are natural in $A$ and $T$.

\section{Preliminaries}\label{prelim}

We begin with the following known definitions.

\begin{defi}
A presimplicial object $S$ in a category $\sA$ is a family of objects $(S_n\in\sA)_{n\ge0}$
together with face $\sA$-morphisms
\begin{align*}
\pa_i:&S_n\to S_{n-1}\ \ (n\ge1,\ 0\le i\le n)
\intertext{satisfying}%
\pa_i\pa_j&=\pa_{j-1}\pa_i\quad\quad\;\; \;i<j.
\intertext{A pseudosimplicial object is a presimplicial object %
together with pseudodegeneracy $\sA$-morphisms}%
s_i:&S_n\to S_{n+1}\ \ (0\le i\le n)
\intertext{satisfying}%
\pa_is_j&=\begin{cases}s_{j-1}\pa_i\quad&i<j,\\
\id\quad&i=j,\;j+1,\\
s_j\pa_{i-1}\quad&i>j+1.
\end{cases}
\intertext{A simplicial object is a pseudosimplicial object %
satisfying the identity}%
s_is_j&=s_{j+1}s_i\quad\quad\;\; \;i\le j.
\end{align*}
\end{defi}

A morphism $f:X\to X'$ between presimplicial objects in a category
$\sA$ is a family of  $\sA$-morphisms
$(f_n:X_n\to X'_n)_{n\ge0}$ which commute with the face operators.
If $X$ and $X'$ are (pseudo)simplicial objects, then the
$\sA$-morphisms $f_n$ must commute with the face and
(pseudo)de\-ge\-ne\-racy operators.

% 1.1
\begin{defi}
A precubical object $X$ in a category $\sA$ is a family of
objects $(X_n\in\sA)_{n\ge 0}$ together with $\sA$-morphisms
$$
\pa_i^0,\pa_i^1:X_n\to X_{n-1}\ \ (1\le i\le n)
$$
satisfying
\begin{align*}
\pa_i^\al\pa_j^\ve&=\begin{aligned}\pa_{j-1}^\ve\pa_i^\al\qquad&\quad\textrm{$i<j$,
$\al$, $\ve\in\{0,1\}$.}\end{aligned}
\end{align*}
The $\sA$-morphisms $\pa_i^0$ and $\pa_i^1$ are called face
operators.
\end{defi}

% 1.2
\begin{defi}[{[12]}]
A cubical object $X$ in a category $\sA$ is a family of
objects $(X_n\in\sA)_{n\ge0}$ together with $\sA$-morphisms
$$
\pa_i^0,\pa_i^1:X_n\to X_{n-1}
$$
as above and
$$
s_i:X_{n-1}\to X_n\ \ (1\le i\le n)
$$
which satisfy
\begin{align*}
\pa_i^\al\pa_j^\ve&=\left.\begin{aligned}\pa_{j-1}^\ve\pa_i^\al\qquad&\quad\textrm{$i<j$, $\al$, $\ve$ $\in\{0,1\}$,}\end{aligned}\right.\\
s_is_j&=\left.\begin{aligned}s_{j+1}s_i\qquad&\quad\textrm{\ $i\le
j$,}\end{aligned}\right.\\
\intertext{and}%
\pa_i^\al s_j&=\left\{\begin{aligned}
s_{j-1}\pa_i^\al\qquad&i<j,\\
\id\qquad\qquad&i=j,\\
s_j\pa_{i-1}^\al\qquad&i>j,
\end{aligned}\right.
\end{align*}
where $\al\in\{0,1\}$. The $\sA$-morphisms $s_i$  are
called degeneracy operators.
\end{defi}

% 1.3
\begin{defi} [{[4]}]
A cubical object $X$ in a category $\sA$ is said to have
connections if there are given  $\sA$-morphisms
$$
\Gm_i:X_n\to X_{n+1}\ \ (1\le i\le n)
$$
satisfying
\begin{align*}
\Gm_i\Gm_j&=\Gm_{j+1}\Gm_i\quad \quad\quad i\le j,\\
\Gm_i s_j&=\begin{cases}
s_{j+1}\Gm_i\quad\;\;&i<j,\\
s_i^2\quad&i=j,\\
s_j\Gm_{i-1}\quad&i>j,
\end{cases}\\
\pa_i^\al \Gm_j&=\begin{cases}
\Gm_{j-1}\pa_i^\al\quad&i<j,\;\;\al\in\{0,1\},\\
\id\quad&i=j,\;j+1,\;\;\al=0,\\
s_j\pa_j^\al\quad&i=j,\;j+1,\;\;\al=1,\\
\Gm_j\pa_{i-1}^\al\quad&i>j+1,\;\;\al\in\{0,1\}.
\end{cases}
\end{align*}
\end{defi}

\begin{exmp}[{[4]}]
The singular cubical complex $KX$ of a topological space $X$ is a cubical object with connections in the category of sets. The connections
$$
\Gamma_i:K_nX\to K_{n+1}X\ \ (n\ge1)
$$
are defined by
$$
\Gamma_i\left(f:[0,1]^n\to
X\right)(t_1,t_2,...,t_{n+1})=f(t_1,...,t_{i-1},\max(t_i,t_{i+1}),t_{i+2},...,t_{n+1}).
$$
\end{exmp}

A morphism $f:X\to X'$ between precubical objects in a category
$\sA$ is a family of  $\sA$-morphisms
$(f_n:X_n\to X'_n)_{n\ge0}$ which commute with the face operators.
If $X$ and $X'$ are cubical objects, then the
$\sA$-morphisms $f_n$ must commute with the face and
degeneracy operators; and if $X$ and $X'$
are cubical objects with connections, then the $\sA$-morphisms $f_n$
must commute with the faces, degeneracies and connections.

For any category $\sA$, let us denote by $pres(\sA)$ the category of presimplicial objects in
$\sA$, by $prec(\sA)$ the
category of precubical objects in $\sA$, by $c(\sA)$ the category of cubical objects in $\mathscr A$, and by $cc(\sA)$ the category of cubical objects with connections in $\sA$.

Let $\sA$ be an abelian category, and $Ch_{\ge0}(\sA)$ the category of
non-negatively graded chain complexes in $\sA$. We
essentially use the normalization functor
$$
N:prec(\sA)\to Ch_{\ge 0}(\sA)
$$
of \'Swi\k atek [16, 17] which is constructed as follows. If $X,Y\in
prec(\sA)$ and $f=(f_n:X_n\to Y_n)_{n\ge0}$ is a
precubical morphism, then define
\begin{align*}
&N_0(X)=X_0,\quad N_n(X)=\bigcap_{i=1}^n\Ker(\pa_i^1:X_n\to X_{n-1}),\quad n>0,\\
&\pa=\sum_{i=1}^n(-1)^{i+1}\pa_i^0:N_n(X)\to N_{n-1}(X),\quad n>0,\\
&N_n(f:X\to Y)=\restr{f_n}{N_n(X)},\ n\ge0.
\end{align*}

Of less importance for us is the functor
$$
C:prec(\sA)\lra Ch_{\ge0}(\sA)
$$
which is defined for arbitrary abelian category $\sA$ by
\begin{align*}
&C(X)_n=X_n,\ n\ge0,\\
&\pa=\sum_{i=1}^n(-1)^i(\pa_i^1-\pa_i^0):C(X)_n\lra C(X)_{n-1},\ n>0,\\
&C(f:X\to X')_n=f_n,\ n\ge0
\end{align*}
(see [16, 17]).

Let  $\sA$ be again an abelian category. Recall the definition of the
Moore normalization functor
$$
I:pres(\sA)\to Ch_{\ge 0}(\sA).
$$
Assume $S$, $S'\in pres (\sA)$ and $g=(g_n:S_n\to
S'_n)_{n\ge0}$ is a presimplicial morphism. Then
\begin{align*}
&I_0(S)=S_0,\quad I_n(S)=\bigcap_{i=0}^{n-1}\Ker(\pa_i:S_n\to S_{n-1}),\quad n>0,\\
&\pa=(-1)^n\pa_n:I_n(S)\to I_{n-1}(S),\quad n>0,\\
&I_n(g:S\to S')=\restr{g_n}{I_n(S)},\ n\ge0.
\end{align*}
One also has the functor
$$
J:pres(\sA)\to Ch_{\ge 0}(\sA)
$$
assigning to $S\in pres(\sA)$ its unnormalized chain
complex. More precisely,
\begin{align*}
&J_n(S)=S_n,\quad n\ge 0,\\
&\pa=\sum_{i=0}^n(-1)^i\pa_i:J_n(S)\to J_{n-1}(S),\quad n>0,\\
&J_n(g:S\to S')=g_n,\ n\ge0.
\end{align*}

An augmented precubical (resp.~presimplicial)  object in a
category $\sA$ is a precubical (resp.~pre\-sim\-pli\-cial)
object $X$ in $\sA$ together with an object $A\in\sA$ and an
$\sA$-morphism $\pa:X_0\to A$ satisfying $\pa\pa_1^0=\pa\pa_1^1$
(resp.~$\pa\pa_0=\pa\pa_1$). Such an object is denoted by
$X\xto\pa A$. A morphism between $X\xto\pa A$ and $X'\xto{\pa'}A'$
is a morphism
$$
\ol{f}=(f_n:X_n\to X'_n)_{n\ge0}
$$
between $X$ and $X'$ together with an $\sA$-morphism $f:A\to A'$
satisfying $f\pa=\pa' f_0$. Denote the cate\-\-gory of augmented
precubical (resp.~presimplicial) objects in $\sA$ by $aprec(\sA)$
(resp. $apres(\sA)$), by $aps(\sA)$ the category of augmented
pseudosimplicial objects in $\sA$, and by $as(\sA)$ the category
of augmented simplicial objects in $\sA$.

Assume $\sA$ is abelian. Then a nonnegative chain complex in
$\sA$  augmented over $A\in\sA$ is a nonnegative
chain complex $X$ in $\sA$ together with an $\sA$-morphism $\pa:X_0\to A$ satisfying $\pa\pa_1=0$. Denote the
category of augmented nonnegative chain complexes in $\sA$
by $aCh_{\ge 0}(\sA)$.

For any abelian category $\sA$, the functor $N:prec(\sA)\to
Ch_{\ge0}(\sA)$, respectively the functors $I$, $J:pres(\sA)\to
Ch_{\ge0}(\sA)$ extend in an obvious way to
$$
aN:aprec(\sA)\to aCh_{\ge 0}(\sA),
$$
respectively
$$
aI,aJ:apres(\sA)\to aCh_{\ge 0}(\sA).
$$

Let $\sA$ be an abelian category. Using the shifting
functor
$$
sh:aCh_{\ge 0}(\sA)\to Ch_{\ge 0}(\sA)
$$
(assigning to $X=(...\to X_n\xto{\pa_n}X_{n-1}\to...\to
X_1\xto{\pa_1}X_0\xto{\pa_0}A)$ the nonnegative chain complex
$sh(X)$ defined by $sh(X)_0=A$, $sh(X)_n=X_{n-1}$,
$sh(\pa)_n=\pa_{n-1}$, $n\ge 1$), we get functors
$$
\wh I=sh\circ \,aI:apres(\sA)\to Ch_{\ge 0}(\sA)
$$
and
$$
\wh J=sh\circ \,aJ:apres(\sA)\to Ch_{\ge 0}(\sA)
$$
the restrictions of which to the category of augmented pseudosimplicial  objects are used in the next section.

Next recall [12] the definition of the homotopy groups of a Kan cubical set.

A cubical set $X$ is said to be Kan if for any $n\ge1$, any
$(i,\al)\in\{1,\dots,n\}\times\{0,1\}$ and any collection of $2n-1$
elements $x_j^\ve\in X_{n-1}$, $1\le j\le n$, $\ve\in\{0,1\}$,
$(j,\ve)\ne(i,\al)$, satisfying
$$
\pa_j^\ve x_k^\om=\pa_{k-1}^\om x_j^\ve,\;\;1\le j<k\le
n,\;\;\ve,\om\in\{0,1\},\ \ (j,\ve)\neq(i,\al),\;\;(k,\om)\neq(i,\al)
$$
there exists $x\in X_n$ such that
$$
\pa_j^\ve x=x_j^\ve,\quad 1\le j\le n,\quad
\ve\in\{0,1\},\quad(j,\ve)\ne(i,\al).
$$

Let $X$ be a cubical set, $z\in X_n$ and $x_j^\ve\in X_{n-1}$, $n\ge 1$, $1\le j\le n$, $\ve\in\{0,1\}$. We write
$$\pa z=\begin{pmatrix}x_1^0&x_2^0&\cdots&x_n^0\\
x_1^1&x_2^1&\cdots&x_n^1
\end{pmatrix}$$
iff $\pa_j^\ve z=x_j^\ve$, $1\le j\le n$, $\ve\in\{0,1\}$.

Suppose now that $(X,\psi)$ is a Kan cubical set with a basepoint
$\psi$. Let
$$\wt{X}_0=X_0\;\;\text{and}\;\;\wt{X}_n=\left\{x\in X_n\,\Big|\,\pa x=\begin{pmatrix}\psi&\cdots&\psi\\
\psi&\cdots&\psi\end{pmatrix}\right\},\;\;n>0,$$
and define an equivalence relation $\sim$ on $\wt{X}_n$ $(n\ge 0)$ by
$$x\sim y\;\;\text{iff}\;\;\exists\;z\in X_{n+1}\;\;\text{such that}\;\;
\pa z=\begin{pmatrix}x&\psi&\cdots&\psi\\
y&\psi&\cdots&\psi\end{pmatrix},$$
or equivalently, by
$$x\sim y\;\;\text{iff}\;\;\exists\;w\in X_{n+1}\;\;\text{such that}\;\;
\pa w=\begin{pmatrix}\psi&\cdots&\psi&x\\
\psi&\cdots&\psi&y\end{pmatrix}$$
[12, Theorem 6]. Denote $\pi_n(X,\psi)=\wt{X}_n/\sim$, $n\ge 0$.

Let $[x],[y]\in\pi_n(X,\psi)$ $([x]=cl_\sim(x))$, $n\ge 1$. Since
$X$ is Kan, there is  $z\in X_{n+1}$ with
$$\pa z=\begin{pmatrix}x&\psi&\psi&\cdots&\psi\\
\pa_1^1z&y&\psi&\cdots&\psi\end{pmatrix}.$$ Clearly,
$\pa_1^1z\in\wt{X}_n$. Define $[x]\bl[y]=[\pa_1^1z]$. This
definition depends only on the equivalence classes of $x$ and $y$
and, with this multiplication, $\pi_n(X,\psi)$, $n\ge 1$, is a
group, called the $n$-th homotopy group of $(X,\psi)$.

Finally let us note that in the text we will freely make use of the Freyd-Mitchell embedding theorem (see e.~g. [20, p.25]) when applying various results in the literature about modules to objects in general abelian categories.

\section{Moore chain complex}\label{Moore}

% 2.2
\begin{prop}
Let $G$ be a cubical group and define
$$M_0(G)=G_0,\;\;\;M_n(G)=\left(\bigcap_{i=1}^n\Ker\pa_i^1\right)\cap
\left(\bigcap_{i=1}^{n-1}\Ker\pa_i^0\right),\;\;\;n>0.$$
Then:
\begin{enumerate}
\item[\rm{(a)}] $\pa_{n+1}^0(M_{n+1}(G))\sbs M_n(G)$, $n\ge 0$.
\item[\rm{(b)}]
$\Img\left(M_{n+1}(G)\xto{\pa_{n+1}^0}M_n(G)\right)\subseteq
\Ker\left(M_n(G)\xto{\pa_n^0}M_{n-1}(G)\right)$, $n>0$.
\item[\rm{(c)}] $\pa_{n+1}^0(M_{n+1}(G))$ is a normal subgroup of $M_n(G)$ and of $G_n$, $n\ge 0$.
\end{enumerate}
\end{prop}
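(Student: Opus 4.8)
The plan is to handle (a) and (b) by direct bookkeeping with the precubical face relations, and to reduce (c) to a conjugation trick that exploits the degeneracy operators. Throughout, recall that in a cubical group every face operator $\pa_i^\al$ and degeneracy operator $s_i$ is a group homomorphism, so each $M_n(G)$ is a subgroup of $G_n$ and $\pa_{n+1}^0(M_{n+1}(G))$ is a subgroup of $G_n$ as a homomorphic image; write $e$ for the neutral element of whichever $G_m$ is relevant. For (a), let $x\in M_{n+1}(G)$; the case $n=0$ is trivial since $M_0(G)=G_0$, so take $n\ge1$. One must show $\pa_i^1\pa_{n+1}^0x=e$ for $1\le i\le n$ and $\pa_i^0\pa_{n+1}^0x=e$ for $1\le i\le n-1$; in each case $i<n+1$, so the identity $\pa_i^\al\pa_j^\ve=\pa_{j-1}^\ve\pa_i^\al$ rewrites the left-hand side as $\pa_n^0\pa_i^1x$, resp.\ $\pa_n^0\pa_i^0x$, and both vanish because $\pa_i^1x=e$ for $1\le i\le n+1$ and $\pa_i^0x=e$ for $1\le i\le n$. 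For (b), with $n>0$ and $x\in M_{n+1}(G)$, the same identity with $i=n$, $j=n+1$ gives $\pa_n^0\pa_{n+1}^0x=\pa_n^0\pa_n^0x=e$, using $\pa_n^0x=e$; hence $\pa_{n+1}^0x\in\Ker\bigl(\pa_n^0\colon M_n(G)\to M_{n-1}(G)\bigr)$.

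The substantive statement is (c). Given $g\in G_n$ and $x\in M_{n+1}(G)$, I would consider
\[
x'=s_{n+1}(g)\,x\,s_{n+1}(g)^{-1}\in G_{n+1}
\]
and claim $x'\in M_{n+1}(G)$. For $1\le i\le n$ we have $i<n+1$, so the degeneracy identity gives $\pa_i^\al s_{n+1}=s_n\pa_i^\al$; since $\pa_i^\al x=e$ for both $\al\in\{0,1\}$ and both $\pa_i^\al$ and $s_n$ are homomorphisms, this yields $\pa_i^\al x'=s_n(\pa_i^\al g)\,s_n(\pa_i^\al g)^{-1}=e$. For $i=n+1$, the identity $\pa_{n+1}^1s_{n+1}=\id$ and $\pa_{n+1}^1x=e$ give $\pa_{n+1}^1x'=g\,e\,g^{-1}=e$. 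Hence $x'\in M_{n+1}(G)$, while $\pa_{n+1}^0s_{n+1}=\id$ gives $\pa_{n+1}^0x'=g\,(\pa_{n+1}^0x)\,g^{-1}$. So conjugation of any element of $\pa_{n+1}^0(M_{n+1}(G))$ by an arbitrary element of $G_n$ stays in $\pa_{n+1}^0(M_{n+1}(G))$, i.e.\ this subgroup is normal in $G_n$ (for $n=0$ use $s_1\colon G_0\to G_1$ in place of $s_{n+1}$; the computation is the same). Normality in $M_n(G)$ then follows immediately, since by (a) the subgroup $\pa_{n+1}^0(M_{n+1}(G))$ lies in $M_n(G)\subseteq G_n$ and is already normal in the larger group $G_n$.

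I do not anticipate any genuine obstacle: parts (a) and (b) are purely mechanical, and the single idea in (c) is to lift a conjugating element $g\in G_n$ to the degenerate element $s_{n+1}(g)\in G_{n+1}$, whose faces are themselves degenerate or equal to $g$, so that conjugation by it preserves all the nested kernel conditions defining $M_{n+1}(G)$. It is exactly here that the degeneracy operators of a cubical group are indispensable; in a merely precubical group this argument would not be available, which anticipates the emphasis on pseudodegeneracies in the later sections.
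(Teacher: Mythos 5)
Your proof is correct and follows essentially the same route as the paper: parts (a) and (b) by direct application of the face identity $\pa_i^\al\pa_j^\ve=\pa_{j-1}^\ve\pa_i^\al$, and part (c) by conjugating $x\in M_{n+1}(G)$ with the degenerate element $s_{n+1}(g)$ and checking that this preserves membership in $M_{n+1}(G)$ while inducing conjugation by $g$ on $\pa_{n+1}^0x$. The only difference is that you spell out the verifications that the paper dismisses as obvious, which is harmless.
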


\begin{proof}
Let $x\in M_{n+1}(G)$. By definition
\begin{align*}
& \pa_i^1\,\pa_{n+1}^0x=\pa_n^0\,\pa_i^1x=1,\quad 1\le i \le n,\\
& \pa_i^0\,\pa_{n+1}^0x=\pa_n^0\,\pa_i^0x=1,\quad 1\le i \le n-1.
\end{align*}
Consequently, $\pa_{n+1}^0x\in M_n(G)$. Hence (a) is proved.

Let $x\in M_{n+1}(G)$. Then $\pa_n^0x=1$. Using this we get $\pa_n^0\,\pa_{n+1}^0x=\pa_n^0\,\pa_n^0x=1$. Hence (b) is proved.

Suppose $y\in M_{n+1}(G)$ and $z\in G_n$. Obviously one has $s_{n+1}z\cdot y\cdot s_{n+1}z^{-1}\in M_{n+1}(G)$ and $\pa_{n+1}^0(s_{n+1}z\cdot y\cdot s_{n+1}z^{-1})=z\cdot \pa_{n+1}^0 y\cdot z^{-1}$. This proves (c).
\end{proof}

Thus, for any cubical group $G$,
$$
M(G)=(...\to M_n(G)\xto{\pa_n^0}M_{n-1}(G)\to...\to
M_1(G)\xto{\pa_1^0}M_0(G)\to1)
$$
is a chain complex of (not necessarily abelian) groups. We call
$M(G)$ the Moore chain complex of $G$. It is obvious that $M(G)$ and
its homology groups $H_n(M(G))=\Ker\,\pa_n^0\,/\Img\,\pa_{n+1}^0$
functorially depend on $G$.

Obviously, in the construction of $M(G)$ one may replace $G$ by a
precubical object $X$ in an abelian category (in [16] $M(G)$ is
introduced for cubical objects in an abelian category). In this case
we redefine the differential $\pa$ on $M(X)$ by
$\pa=(-1)^{n+1}\pa_n^0$. Then $M(X)$ is a chain subcomplex of $N(X)$
and one has a natural monomorphism $i:M(X)\to N(X)$. \vskip+4mm

Now let us introduce pseudocubical objects and pseudocubical
objects with pseudoconnections.

\vskip+4mm

% 1.4
\begin{defi}
A pseudocubical object $X$ in a category $\sA$ is a family
of objects $(X_n\in\sA)_{n\ge 0}$ together with  face
$\sA$-morphisms
\begin{align*}
\pa_i^0,\pa_i^1:&\ X_n\to X_{n-1}\ \ (1\le i\le n)
\intertext{and pseudodegeneracy $\sA$-morphisms}%
s_i:&\ X_{n-1}\to X_n\ \ (1\le i\le n)
\intertext{satisfying}%
\pa_i^\al\pa_j^\ve&=\pa_{j-1}^\ve\pa_i^\al\qquad
i<j,\quad\al,\ve\in\{0,1\}
\intertext{and}%
\pa_i^\al s_j&=\begin{cases}
s_{j-1}\pa_i^\al\ &i<j,\\
\id\ &i=j,\\
s_j\pa_{i-1}^\al\ &i>j,
\end{cases}
\end{align*}
for $\al\in\{0,1\}$.
\end{defi}

% 1.5
\begin{defi}
We say that a pseudocubical object $X$ in a category $\sA$
has  pseudoconnections if there are given  $\sA$-morphisms
$$
\Gm_i:X_n\to X_{n+1}\ \ (1\le i\le n)
$$
which satisfy
$$
\pa_i^\al\Gm_j=\begin{cases}
\Gm_{j-1}\pa_i^\al\quad&i<j,\;\;\al\in\{0,1\},\\
\id\quad&i=j,\;j+1,\;\;\al=0,\\
s_j\pa_j^\al\quad&i=j,\;j+1,\;\;\al=1,\\
\Gm_j\pa_{i-1}^\al\quad&i>j+1,\;\;\al\in\{0,1\}.
\end{cases}
$$
\end{defi}

Morphisms of pseudocubical objects and pseudocubical objects with pseudoconnections are defined in an obvious way, just as for cubical objects and cubical objects with connections.

For a category $\sA$, let us denote by $pcpc(\sA)$ the
category of pseudocubical objects with pseudoconnections in
$\sA$.

Let $\sA$ be an abelian category. Define the functor
$$
\ol{N}:pcpc(\sA)\to aps(\sA)
$$
as follows:
\begin{align*}
\ol{N}(X)_{-1}=X_0,\;\;&\ol{N}(X)_n=\bigcap_{i=1}^{n+1}
\Ker(\pa_i^1:X_{n+1}\to X_n),\;\;n\ge0,\\
\ve=\restr{\pa_1^0}{\ol{N}(X)_0}:&\ \ol{N}(X)_0\to\ol{N}(X)_{-1},\\
\pa_i=\restr{\pa_{i+1}^0}{\ol{N}(X)_n}:&\ \ol{N}(X)_n\to\ol{N}(X)_{n-1},\;\;n\ge 1,\;\;0\le i\le n,\\
s_j=\restr{\Gm_{j+1}}{\ol{N}(X)_{n-1}}:&\ \ol{N}(X)_{n-1}\to\ol{N}(X)_n,\;\;n\ge 1,\;\;0\le j\le n-1,\\
\ol{N}(f:X\to Y)_n&=\restr{f_{n+1}}{\ol{N}(X)_n}.
\end{align*}
On the other hand, we have the functors $\wh I$ and $\wh J$ from
$aps(\sA)$ to $Ch_{\ge 0}(\sA)$. One can easily see that
$$
\wh I\circ\ol{N}=M\textrm{ and }\wh{J}\circ\ol{N}=N.
$$

Let $G$ be a simplicial object in an abelian category $\sA$. Then
the normalized chain complex $I(G)$ of $G$ is a chain subcomplex
of the unnormalized chain complex $J(G)$ of $G$. The Moore theorem
says that this inclusion is a chain homotopy equivalence. The
proof of this theorem [15, p.94] does not use the simplicial
identity: $s_is_j=s_{j+1}s_i$, $i\le j$. Therefore one may assert
that for any augmented  pseudosimplicial object $S$ in an abelian
category $\sA$, the natural monomorphism $\wh I(S)\to\wh J(S)$ is
a chain homotopy equivalence. Replacing now $S$ by $\ol{N}(X)$,
where $X$ is a pseudocubical object with pseudoconnections in
$\sA$, and using $\wh I\circ\ol{N}=M$ and $\wh J\circ\ol{N}=N$, we
get a cubical analog of the Moore theorem:

% 2.4
\begin{theo}\label{normtheo}
Let $X$ be a pseudocubical object with pseudoconnections in an abelian category. Then  the natural monomorphism $i:M(X)\to N(X)$ is  a chain homotopy equivalence and, therefore, $i_*:H_n(M(X))\to H_n(N(X))$ is an isomorphism for all $n\ge0$.\hfill\qed
\end{theo}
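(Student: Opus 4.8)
The plan is to reduce the cubical statement to the already-discussed pseudosimplicial version of the Moore theorem via the functor $\ol N\colon pcpc(\sA)\to aps(\sA)$ and the identities $\wh I\circ\ol N=M$ and $\wh J\circ\ol N=N$, exactly as sketched in the paragraph preceding the theorem. First I would fix a pseudocubical object $X$ with pseudoconnections in the abelian category $\sA$ and form $S=\ol N(X)\in aps(\sA)$; this is legitimate precisely because $X$ carries pseudodegeneracies and pseudoconnections, which is what makes the $s_j$ (built from the $\Gm_{j+1}$) land in the right kernels and satisfy the pseudosimplicial identities. Then I would invoke the asserted strengthening of the classical Moore theorem: for any augmented pseudosimplicial object $S$ in an abelian category, the canonical monomorphism $\wh I(S)\hookrightarrow\wh J(S)$ is a chain homotopy equivalence, the point being that the standard proof (the explicit chain contraction built from the degeneracies) nowhere uses the simplicial identity $s_is_j=s_{j+1}s_i$.

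Next I would transport this along the identities $\wh I\circ\ol N=M$ and $\wh J\circ\ol N=N$: applying $\wh I$ and $\wh J$ to $S=\ol N(X)$ identifies $\wh I(S)\hookrightarrow\wh J(S)$ with $i\colon M(X)\hookrightarrow N(X)$, so $i$ is a chain homotopy equivalence. Since a chain homotopy equivalence induces isomorphisms on all homology groups, this immediately gives that $i_*\colon H_n(M(X))\to H_n(N(X))$ is an isomorphism for every $n\ge0$, completing the proof. The compatibility of $i$ with the identification is just unwinding definitions: in each degree $n\ge0$ one has $M(X)_n=N(X)_{n-1}^{\,\cap}$-type subobjects and the differentials match up to the sign conventions already recorded for $M$, $N$, $\wh I$, $\wh J$, and the shift functor $sh$.

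The one genuinely substantive ingredient is the claim that the Moore theorem holds for pseudosimplicial, not merely simplicial, objects — i.e. that the reference [15, p.\,94] argument survives the weakening. I would handle this by recalling the explicit contraction: one filters $J(S)$ by the subcomplexes $\bigcap_{i=0}^{k}\Ker\pa_i$ (or dually uses the $N^s_k$ normalization) and exhibits, degree by degree, maps built from the pseudodegeneracies $s_i$ together with faces $\pa_i$ that provide a chain homotopy between the inclusion $\wh I(S)\hookrightarrow\wh J(S)$ composed with a retraction and the identity. Checking that the required relations hold uses only the mixed identities $\pa_i s_j = s_{j-1}\pa_i$, $\id$, or $s_j\pa_{i-1}$ — all part of the pseudosimplicial structure — and never $s_is_j=s_{j+1}s_i$. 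This is the step I expect to be the main obstacle, though it is more bookkeeping than conceptual difficulty, and one may alternatively simply cite that the classical proof is manifestly insensitive to that identity, as the theorem statement itself does.
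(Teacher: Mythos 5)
Your proposal is correct and follows exactly the paper's own argument: the paper proves the theorem by forming $\ol N(X)\in aps(\sA)$, observing that the classical proof of the Moore theorem in [15, p.~94] never uses the identity $s_is_j=s_{j+1}s_i$ and hence applies to augmented pseudosimplicial objects, and then transporting the resulting homotopy equivalence $\wh I(S)\hookrightarrow\wh J(S)$ back along the identities $\wh I\circ\ol N=M$ and $\wh J\circ\ol N=N$. Your additional remarks on why the explicit contraction survives the weakening only make explicit what the paper cites.
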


In fact $i:M(X)\to N(X)$ has a natural homotopy inverse $r:N(X)\to
M(X)$ with $ri=\id_{M(X)}$ (see the proof of Theorem 22.1 of
[15]).

\;

\;

Note that the natural monomorphism $ N(X) \hookrightarrow C(X)$
need not be a chain homotopy equivalence for a cubical object $X$
with connections in an abelian category.

\;

\;

As is known the prime examples of pseudosimplicial objects are the
projective resolutions used by Tierney and Vogel to define their
derived functors. In the next section we construct cubical analogs
of Tierney-Vogel's projective resolutions (in order to define our
cubical derived functors) and show that they have
pseudodegeneracies and pseudoconnections.

\

The material in the rest of this section is not needed in what
follows; however we believe it is interesting for its own sake.

\

Suppose that $X$ is a cubical object with connections in an
abelian category $\sA$. Then $\ol{N}(X)$ is an augmented
simplicial object. Denote by $F(X)$ the chain subcomplex of $N(X)$
generated by the images of the degeneracies of $\ol{N}(X)$. Since
the degeneracy morphisms of $\ol{N}(X)$ are defined by
$s_j=\restr{\Gm_{j+1}}{\ol{N}(X)_{n-1}}$, $n\ge 1$, $0\le j\le n-1$, one
has
$$F(X)_n=\Gm_1(N(X)_{n-1})+\dots+\Gm_{n-1}(N(X)_{n-1})\quad\text{for all}\quad n\ge 2.$$
Besides, $F(X)_0=0$ and $F(X)_1=0$.

% 2.6
\begin{theo}
Let $X$ be a cubical object with connections in an abelian category
$\sA$. Then:
\begin{enumerate}
\item[\rm{(a)}] $N(X)=M(X)\oplus F(X)$ and hence $M(X)$ is isomorphic to $N(X)/F(X)$.
\item[\rm(b)] The canonical projection $\pi:N(X)\to N(X)/F(X)$ is a chain homotopy equivalence.
\end{enumerate}
\end{theo}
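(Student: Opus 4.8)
The plan is to reduce everything to the corresponding classical facts about simplicial objects in an abelian category, transported along the functor $\ol N$. Recall that $\ol N(X)$ is an augmented simplicial object; write $S$ for its underlying (non-augmented) simplicial object, so $S_n=\ol N(X)_n$ for $n\ge0$ and the degeneracies of $S$ are $s_j=\restr{\Gm_{j+1}}{S_{n-1}}$. From $N(X)_0=X_0$, $N(X)_n=\ol N(X)_{n-1}=S_{n-1}$ $(n\ge1)$ and the identities $\wh J\circ\ol N=N$, $\wh I\circ\ol N=M$, the complexes $N(X)$ and $M(X)$ are just $J(S)$ and $I(S)$ reindexed by the shift, with $X_0$ sitting in degree $0$. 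First I would set up the dictionary matching $F(X)$ with the degenerate subcomplex $D(S)\subseteq J(S)$, $D(S)_m=\sum_{i=0}^{m-1}s_i(S_{m-1})$: since $s_i=\restr{\Gm_{i+1}}{S_{m-1}}$ and $S_{m-1}=N(X)_m$, one has $D(S)_{n-1}=\Gm_1(N(X)_{n-1})+\dots+\Gm_{n-1}(N(X)_{n-1})=F(X)_n$ for $n\ge2$, while $D(S)_0=0=F(X)_1$ and $F(X)_0=0$. So $F(X)$ is exactly the shift of $D(S)$.

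For part (a) I would invoke the classical normalization theorem for simplicial objects (see [15]): for every $m$ there is a direct sum decomposition $S_m=I(S)_m\oplus D(S)_m$ in $\sA$, in which $D(S)$ is a subcomplex of $J(S)$, whence $J(S)=I(S)\oplus D(S)$ as chain complexes. Reading this off through the dictionary, and using that $i:M(X)\hookrightarrow N(X)$ and $F(X)\hookrightarrow N(X)$ are subcomplexes by construction, we get $N(X)_n=M(X)_n\oplus F(X)_n$ in $\sA$ for every $n$ (in degree $0$ this is $X_0=X_0\oplus0$). A degreewise direct sum of two subcomplexes is a direct sum of chain complexes, so $N(X)=M(X)\oplus F(X)$ and $M(X)\cong N(X)/F(X)$.

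For part (b), by (a) the inclusion $i:M(X)\to N(X)$ realizes $M(X)$ as a direct summand with complement $F(X)$, so the canonical projection $\pi:N(X)\to N(X)/F(X)\cong M(X)$ is a retraction of $i$, i.e. $\pi i=\id_{M(X)}$. By Theorem \ref{normtheo} $i$ is a chain homotopy equivalence; picking a chain homotopy inverse $r$ we get $i\pi\simeq i\pi(ir)=i(\pi i)r=ir\simeq\id_{N(X)}$, so $\pi$ is a chain homotopy inverse of $i$, in particular a chain homotopy equivalence. (Equivalently one could transport through $\ol N$ the classical fact that $J(S)\twoheadrightarrow J(S)/D(S)$ is a chain homotopy equivalence.)

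I expect the only genuine obstacle to be the classical degreewise splitting $S_m=I(S)_m\oplus D(S)_m$ together with checking its compatibility with the differentials — but this is entirely standard and quotable from [15]; the step that actually has to be carried out here is the index bookkeeping identifying $F(X)$ with the shift of $D(S)$, after which (a) is immediate and (b) is formal given Theorem \ref{normtheo}.
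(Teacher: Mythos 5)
Your proposal is correct and follows essentially the same route as the paper: transport the statement through $\ol N$ using $\wh I\circ\ol N=M$, $\wh J\circ\ol N=N$, identify $F(X)$ with the (shifted) degenerate subcomplex, and quote the classical normalization theorem for simplicial objects from [15]. The only cosmetic difference is that for (b) you derive the homotopy equivalence of $\pi$ from the splitting together with Theorem \ref{normtheo}, whereas the paper quotes Corollary 22.3 of [15] directly — both are fine, and you note the latter alternative yourself.
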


\begin{proof}
As $\ol{N}(X)$ is an augmented simplicial object, one has
$$
\wh J(\ol{N}(X))=\wh{I}(\ol{N}(X))\oplus D(\ol{N}(X))
$$
(where $D(\ol{N}(X))$ is the chain subcomplex of $\wh
J(\ol{N}(X))$ generated by the degenerate elements of
$\ol{N}(X)$), and the canonical projection
$$
\wh J(\ol{N}(X))\to\wh J(\ol{N}(X))/D(\ol{N}(X))
$$
is a chain homotopy equivalence (see Corollary 22.2 and 22.3 of
[15]). But $\wh J(\ol{N}(X))=N(X)$, $\wh{I}(\ol{N}(X))=M(X)$ and
$D(\ol{N}(X))=F(X)$. Hence (a) and (b) hold.
\end{proof}

Suppose $X$ is a cubical set with connections. An n-cube $x\;
\epsilon\; X_n$ is said to be folded if there exists $y\;
\epsilon\; X_{n-1}$ such that $\Gamma_i y=x$ for some $i\;
\epsilon\; \{1,...,n-1\}$ (see [1]). When we define the singular
cubical homology $H_{\ast}(T)$ of a topological space $T$, it is
necessary to factor out the degenerate singular cubes. It easily
follows from Theorem 2.5 that the folded singular cubes can be
ignored in addition when we calculate $H_{\ast}(T)$.

Let $\sA$ be an abelian category. Restricting $M$ and $\ol{N}$ to
the category $cc(\sA)$, and $\wh{I}$ to the category $as(\sA)$, we
get the following commutative diagram
$$
\xymatrix{cc(\sA)\ar[rr]^-{M}\ar[rd]_-{\ol{N}}&&Ch_{\ge 0}(\sA)\\
&as(\sA)\ar[ru]_-{\wh{I}}}
$$
in which, by the Dold-Kan Theorem, $\wh{I}$ is an equivalence of
categories. Moreover, the functor $\ol{N}:cc(\sA)\to as(\sA)$ is
also an equivalence. The proof is very similar to the proof of the
Dold-Kan Theorem [20, p.270]. Thus we have got an alternative
proof of the result by Brown and Higgins [5] about the equivalence
of the categories $cc(\sA)$ and $Ch_{\ge 0}(\sA)$. Furthermore, we
have pointed out the functor $M:cc(\sA)\to Ch_{\ge 0}(\sA)$ which
realizes this equivalence.

\

In what follows, for pointed Kan cubical monoids $G$, we will always take the unit 1 of $G_0$ as basepoint, and denote $\pi_n(G,1)$ by $\pi_n(G)$.

\begin{prop}\label{moorekan}
Let $G$ be a Kan cubical monoid and suppose that $n\ge1$. Then:

\rm{(a)} For any $x,y\in\wt{G}_n$, \ $[x]\bullet[y]=[xy]$.

\rm{(b)} $\pi_n(G)$ is abelian.
\end{prop}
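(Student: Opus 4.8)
The strategy is the standard Eckmann--Hilton-style argument adapted to the cubical Kan setting, exactly as in the simplicial case. Throughout, $1$ denotes the unit of $G_0$ and, by abuse, also each degenerate unit $s_{i_1}\cdots s_{i_k}(1)\in G_k$; note these are precisely the constant cubes, and the basepoint $\psi$ in the definition of $\pi_n$ is taken to be $1\in G_0$. The key structural input is that $G$ carries a strictly associative unital multiplication in each degree which strictly commutes with all faces and degeneracies; in particular, if $x,y\in\wt G_n$ then $xy\in\wt G_n$ as well, since $\pa_j^\ve(xy)=(\pa_j^\ve x)(\pa_j^\ve y)=1\cdot 1=1$.

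\textbf{Part (a).} The plan is to exhibit an explicit $(n{+}1)$-cube $z\in G_{n+1}$ with
$$\pa z=\begin{pmatrix}x&1&1&\cdots&1\\ xy&y&1&\cdots&1\end{pmatrix},$$
which by the definition of $[x]\bullet[y]$ (taking $\pa_1^1 z = xy$) forces $[x]\bullet[y]=[xy]$; here one must also check $xy\in\wt G_n$, already noted above. To construct $z$, I will use the degeneracies and the multiplication: a natural candidate is $z=(s_2 x)\cdot(s_1 y)$, or a small variant thereof. One computes the $2(n{+}1)$ faces of $z$ by applying the cubical identities $\pa_i^\al s_j=s_{j-1}\pa_i^\al$ ($i<j$), $=\id$ ($i=j$), $=s_j\pa_{i-1}^\al$ ($i>j$), together with multiplicativity of $\pa_i^\al$, and verifies the face matrix has the required shape — using repeatedly that $\pa_i^\al x$ and $\pa_i^\al y$ are units for all $i,\al$. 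If $z=(s_2x)(s_1y)$ does not give exactly the matrix above, I will adjust the degeneracy indices (e.g. use $s_1$ and $s_2$ in the other roles, or multiply by a suitable constant cube) so that columns $2,\dots,n{+}1$ collapse to units while column $1$ reads $\binom{x}{xy}$; the Kan condition is not even needed for this direction, only the cubical-monoid structure.

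\textbf{Part (b).} Commutativity of $\pi_n(G)$ for $n\ge 1$ follows from (a) by the usual Eckmann--Hilton swindle. By part (a), the group operation $\bullet$ on $\pi_n(G)$ is induced by the degreewise multiplication: $[x]\bullet[y]=[xy]$. The same argument, applied with the roles of the two columns (first vs.\ last) interchanged — recall the ``equivalently by'' clause in the definition of $\sim$ using $\pa w=\binom{1\cdots 1\;x}{1\cdots 1\;y}$, which shows the group structure can equally be read off from the last coordinate — yields a second multiplication on $\pi_n(G)$ that is also induced by $xy$; more simply, since $G_n$ is a monoid, $[xy]=[yx]$ would follow once we know the two a priori different ways of multiplying agree. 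Concretely: the map $\pi_n(G)\times\pi_n(G)\to\pi_n(G)$, $([x],[y])\mapsto[xy]$, is a group homomorphism in each variable (because multiplication in $G_n$ is bilinear with respect to itself — that is, $(xy)(x'y')=(xx')(yy')$ is automatic only when $G$ is commutative, so instead one invokes that both $\bullet$ and the ``opposite'' composition from the other end of the definition are unital monoid operations on $\pi_n(G)$ sharing the same unit $[1]$ and each distributing over the other), hence by Eckmann--Hilton they coincide and are commutative. I will spell this out by choosing representatives, forming the $(n{+}1)$-cube witnessing $[x]\bullet[y]\bullet[x']\bullet[y']$ two ways, and concluding $[x][y]=[y][x]$.

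\textbf{Main obstacle.} The only real work is the bookkeeping in part (a): writing down the correct element $z\in G_{n+1}$ (the right product of degeneracies applied to $x$ and $y$) and checking all $2(n{+}1)$ faces against the target matrix using the pseudo/cubical identities. Once (a) is in hand, (b) is a formal consequence and the argument is short; care is only needed to justify the Eckmann--Hilton step cubically, i.e.\ to see that the ``first-column'' and ``last-column'' descriptions of $\bullet$ in the definition of $\pi_n$ furnish the two mutually distributive operations with a common unit.
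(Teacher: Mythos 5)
There is a genuine gap in your part (a), and it propagates to (b). You claim the Kan condition is not needed and that an explicit filler $z$ with face matrix $\left(\begin{smallmatrix}x&1&\cdots&1\\ xy&y&1&\cdots&1\end{smallmatrix}\right)$ can be written down as a product of degeneracies, e.g.\ $z=(s_2x)(s_1y)$ or a variant. No such $z$ of the proposed form exists. For any product $z=s_{i_1}(w_1)\cdots s_{i_k}(w_k)$ with all $w_m\in\wt G_n$ (or constant cubes), each factor of $\pa_j^\al z$ is independent of $\al$: if $j=i_m$ then $\pa_j^\al s_{i_m}w_m=w_m$ for both values of $\al$, and if $j\ne i_m$ then $\pa_j^\al s_{i_m}w_m$ is a degeneracy of a face of $w_m$, hence equal to $1$ for both values of $\al$. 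So the two rows of the face matrix of any such $z$ coincide, and one can never realize a matrix whose first column is $\left(\begin{smallmatrix}x\\ xy\end{smallmatrix}\right)$ unless $y=1$. (Concretely, $(s_1x)(s_2y)$ fails because $\pa_2^0=y\ne1$, and $(s_2x)(s_1y)$ has both rows equal to $(y,x,1,\dots,1)$.) Adjusting indices or multiplying by constant cubes does not help. The filler whose $\pa_1^1$-face computes $[x]\bl[y]$ genuinely comes from the Kan condition and is not constructible from the monoid and degeneracy structure alone.

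The proposition is nevertheless true, but (a) and (b) must be proved together, which is what the paper does: introduce a second operation $[x]*[y]=[xy]$ (well defined because the product $z_1z_2$ of fillers witnessing $x\sim x'$ and $y\sim y'$ witnesses $xy\sim x'y'$), establish the interchange law $\left([x]\bl[y]\right)*\left([u]\bl[v]\right)=\left([x]*[u]\right)\bl\left([y]*[v]\right)$ by multiplying two Kan fillers $w_1,w_2$ that define $[x]\bl[y]$ and $[u]\bl[v]$, and then apply the Eckmann--Hilton lemma to the pair $(\bl,*)$ with common unit $[1]$: this yields $\bl=*$, which is (a), and commutativity, which is (b), simultaneously. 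Your sketch of (b) gestures at Eckmann--Hilton but pairs the wrong two operations: the ``first-column'' and ``last-column'' descriptions define the same equivalence relation (that is the content of [12, Theorem 6]), not two interchanging multiplications; and any argument avoiding the monoid structure could not prove $\pi_1(G)$ abelian, since $\pi_1$ of a general Kan cubical set is nonabelian. The correct second operation is the one induced by the monoid multiplication.
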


To prove this proposition, we need the following well known
\begin{lem}\label{twoopers}
Let $\bl$ and $*$ be binary operations on a set $E$. Assume that they have units and satisfy
$$
(a\bl b)*(a'\bl b')=(a*a')\bl(b*b')
$$
for all $a,a',b,b'\in E$. Then these operations coincide and $(E,\bl)$ is an abelian monoid.\hfill\qed
\end{lem}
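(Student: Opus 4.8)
The plan is to run the standard Eckmann--Hilton argument, exploiting the single interchange identity repeatedly. Write $e$ for the unit of $\bl$ and $f$ for the unit of $*$; a priori these are different elements of $E$, and the first task is to show $e=f$. I would feed the quadruple $(a,b,a',b')=(f,e,e,f)$ into the interchange identity: since $e$ is a unit for $\bl$, the left-hand side $(f\bl e)*(e\bl f)$ collapses to $f*f$, which equals $f$ because $f$ is a unit for $*$; dually, since $f$ is a unit for $*$, the right-hand side $(f*e)\bl(e*f)$ collapses to $e\bl e$, which equals $e$. Hence $e=f$, and from now on $e$ denotes this common two-sided unit.

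Next I would show that the two operations agree. For arbitrary $a,b\in E$, insert the unit via $a=a\bl e$ and $b=e\bl b$ and apply interchange:
$$a*b=(a\bl e)*(e\bl b)=(a*e)\bl(e*b)=a\bl b .$$
Commutativity then follows from the symmetric insertion $a=e\bl a$, $b=b\bl e$, now that $\bl=*$:
$$a\bl b=a*b=(e\bl a)*(b\bl e)=(e*b)\bl(a*e)=b\bl a .$$
Finally, associativity: again using $\bl=*$ together with $c=e\bl c$,
$$(a\bl b)\bl c=(a\bl b)*(e\bl c)=(a*e)\bl(b*c)=a\bl(b\bl c) .$$
Taken together, these four computations show that $\bl$ and $*$ coincide and that $(E,\bl)$ is a commutative, associative monoid with unit $e$, i.e.\ an abelian monoid.

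The argument is entirely formal, so I do not anticipate a genuine obstacle; it is a matter of choosing the right substitutions into the interchange identity and keeping track of which unit is used where. The two points that warrant a moment's care are: (i) that ``have units'' must be read as ``possess a two-sided identity element'', since this is exactly what licenses the collapses at each stage; and (ii) that associativity of $\bl$ is \emph{not} among the hypotheses and so has to be extracted as in the last display rather than assumed at the outset.
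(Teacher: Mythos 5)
Your proof is correct: it is the standard Eckmann--Hilton argument, and all four substitutions into the interchange law (unit identification, $*=\bl$, commutativity, associativity) check out, including the point you flag that associativity must be derived rather than assumed. The paper states this lemma as ``well known'' and provides no proof at all, so there is nothing to compare against; your argument is exactly the intended one.
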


\begin{proof}[Proof of Proposition \ref{moorekan}]
(cf. the proofs of Propositions 17.2 and 17.3 of [15]). Using the monoid structure on $G$, a binary operation may be defined on $\pi_n(G)$ by
$[x]*[y]=[xy]$. This is well-defined. Indeed, if $x,y\in\wt{G}_n$ then $xy\in\wt{G}_n$, and if $x\sim x'$ and $y\sim y'$, $x,x',y,y'\in\wt{G}_n$, i.~e.,
$$\pa z_1=\begin{pmatrix}x&1&\dots&1\\
x'&1&\dots&1\end{pmatrix}\;\;\text{and}\;\;\pa z_2=\begin{pmatrix}y&1&\dots&1\\
y'&1&\dots&1\end{pmatrix},\;\; z_1,z_2\in G_{n+1},$$
then
$$\pa(z_1z_2)=\begin{pmatrix}xy&1&\dots&1\\
x'y'&1&\dots&1\end{pmatrix},$$
i.e., $xy\sim x'y'$. Next, assume that $[x],[y],[u],[v]\in\pi_n(G)$. By the Kan condition, there are $w_1,w_2\in G_{n+1}$ with
$$\pa w_1=\begin{pmatrix}x&1&1&\dots&1\\
\pa_1^1w_1&y&1&\dots&1\end{pmatrix}\;\;\text{and}\;\;\pa w_2=\begin{pmatrix}u&1&1&\dots&1\\
\pa_1^1w_2&v&1&\dots&1\end{pmatrix},$$
whence
$$\pa(w_1w_2)=\begin{pmatrix}xu&1&1&\dots&1\\
\pa_1^1w_1\cdot\pa_1^1 w_2&yv&1&\dots&1\end{pmatrix}.$$
Then, by the definitions of $\bullet$ and $*$, we can write
\begin{gather*}
\left([x]\bl[y]\right)*\left([u]\bl[v]\right)=[\pa_1^1w_1]*[\pa_1^1w_2][\pa_1^1w_1\cdot\pa_1^1w_2]=\\
=[xu]\bl[yv]=\left([x]*[u]\right)\bl\left([y]*[v]\right).
\end{gather*}
Besides, $[x]\bl[1]=[x]=[1]\bl[x]$ and $[x]*[1]=[x]=[1]*[x]$. Consequently, we conclude, by Lemma \ref{twoopers}, that  $\bl$ and $*$ coincide and $\pi_n(G)$ is abelian.
\end{proof}

\begin{prop}\label{kanpimoore}
For any \emph{Kan} cubical group $G$,
$$
\pi_n(G) = H_n(M(G)),\ n\ge0.
$$
\end{prop}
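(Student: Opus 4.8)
The plan is to construct mutually inverse maps between $\pi_n(G)$ and $H_n(M(G))$ and check they are well-defined group homomorphisms. Recall that $M(G)$ in degree $n$ is $\bigl(\bigcap_{i=1}^n\Ker\pa_i^1\bigr)\cap\bigl(\bigcap_{i=1}^{n-1}\Ker\pa_i^0\bigr)$ with differential $\pa_n^0$, whereas $\wt G_n$ consists of those $x\in G_n$ with $\pa_j^\ve x=1$ for all $(j,\ve)$. Thus $\wt G_n\subseteq \Ker\pa_n^0\cap M_n(G)$, i.e.\ every element of $\wt G_n$ is a cycle of $M(G)$. So the first step is to define $\vf\colon\wt G_n\to H_n(M(G))$ by $x\mapsto [x]$ (the homology class of the cycle $x$), and then show it factors through the equivalence relation $\sim$: if $\pa z=\left(\begin{smallmatrix}x&1&\cdots&1\\ y&1&\cdots&1\end{smallmatrix}\right)$ with $z\in G_{n+1}$, one should check that $z$ (or a suitable modification of $z$) lies in $M_{n+1}(G)$ and that $\pa_{n+1}^0 z = x y^{-1}$ up to the relevant normal subgroup, so $[x]=[y]$ in $H_n(M(G))$. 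Since $H_n(M(G))$ is a quotient group $\Ker\pa_n^0/\Img\pa_{n+1}^0$ with $\Img\pa_{n+1}^0$ normal (Proposition before), and by Proposition \ref{moorekan} $\pi_n(G)$ is abelian for $n\ge1$, it suffices to get a well-defined set map that is also multiplicative; multiplicativity follows from $[x]\bl[y]=[xy]$ (Proposition \ref{moorekan}(a)) matching the product of cycles in $M(G)$.

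Second, I would construct the inverse $\psi\colon H_n(M(G))\to\pi_n(G)$. Given a cycle $c\in M_n(G)$, i.e.\ $c\in G_n$ with $\pa_i^1 c=1$ for all $i$, $\pa_i^0 c=1$ for $i<n$, and $\pa_n^0 c=1$ as well, we have $\pa c=\left(\begin{smallmatrix}1&\cdots&1&1\\1&\cdots&1&1\end{smallmatrix}\right)$, so $c\in\wt G_n$; set $\psi([c])=[c]\in\pi_n(G)$. One must check this is well-defined: if $c=\pa_{n+1}^0 b$ for some $b\in M_{n+1}(G)$, then $b\in G_{n+1}$ satisfies $\pa_i^1 b=1$ ($1\le i\le n+1$), $\pa_i^0 b=1$ ($1\le i\le n$), so $\pa b=\left(\begin{smallmatrix}c&1&\cdots&1\\ 1&1&\cdots&1\end{smallmatrix}\right)$ (reading the first column as $\pa_1^0 b=c$, $\pa_1^1 b=1$; one may need to reindex so that the distinguished column is the first), which exhibits $c\sim 1$ in $\pi_n(G)$. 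Hence $\psi$ is well-defined, and visibly $\vf$ and $\psi$ are inverse to each other on representatives. I should also handle $n=0$ separately, where $\wt G_0=G_0$, $M_0(G)=G_0$, $H_0(M(G))=G_0/\Img(\pa_1^0\colon M_1(G)\to G_0)$ and $\pi_0(G)$ is the set of components; the same argument applies.

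The main obstacle I anticipate is bookkeeping with the Kan fillers and normal subgroups: namely, when $x\sim y$ via some $z\in G_{n+1}$ whose faces involve only $x$, $y$, and basepoints in specified columns, the element $z$ itself need not lie in $M_{n+1}(G)$ (it is only guaranteed that certain faces are trivial, not all of $\pa_i^1 z$ for $i\le n+1$ and $\pa_i^0 z$ for $i\le n$). So the delicate point is to replace $z$ by an honest element $z'$ of $M_{n+1}(G)$ with $\pa_{n+1}^0 z' \in x\,\Img\pa_{n+1}^0\cdot y^{-1}$; this is exactly the kind of argument carried out for the simplicial Moore complex in [15], and I expect to mimic the proof there (multiplying $z$ by suitable products of degeneracies $s_i$ and connections $\Gm_i$ of lower faces to kill the unwanted faces one at a time), using the cubical identities for $\pa_i^\al$, $s_j$ to control the effect. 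Modulo that normalization trick, the rest is formal, and the construction is manifestly natural in $G$, so the isomorphism $\pi_n(G)\cong H_n(M(G))$ will be natural as claimed.
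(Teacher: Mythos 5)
Your overall strategy is the paper's: identify the cycles $Z_nM(G)=\Ker\bigl(\pa_n^0\colon M_n(G)\to M_{n-1}(G)\bigr)$ with $\wt G_n$ (note this is an equality, not just an inclusion: membership in $M_n(G)$ plus the cycle condition $\pa_n^0x=1$ says precisely that all faces of $x$ are trivial), invoke Proposition \ref{moorekan} for the group structures, and reduce everything to showing $x\sim y$ iff $xy^{-1}\in\Img\pa_{n+1}^0$. But the step you defer as the ``delicate point'' is the entire content of the proof, and your sketch of it has two concrete problems. First, the reindexing should go the other way: use the \emph{last}-column characterization of $\sim$ (equivalent to the first-column one by [12, Theorem 6], as recalled in Section 1), i.e.\ a filler $z$ with $\pa z=\left(\begin{smallmatrix}1&\cdots&1&x\\ 1&\cdots&1&y\end{smallmatrix}\right)$. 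The point is that $M_{n+1}(G)$ constrains every face except $\pa_{n+1}^0$, so the distinguished column must be the $(n+1)$-st: with that choice the only obstruction to $z\in M_{n+1}(G)$ is the single face $\pa_{n+1}^1z=y$. With your first-column filler, the faces carrying $x$ and $y$ are exactly the ones that $M_{n+1}(G)$ forces to be $1$, while the face $\pa_{n+1}^0$ from which the homology class is read is already trivial, so ``killing unwanted faces one at a time'' cannot produce the boundary relation you need. Second, do not invoke connections $\Gm_i$: the hypothesis is only that $G$ is a Kan cubical group, and such a $G$ need not carry connections (nor are they needed).

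Once the last-column form is used, no iterative normalization is required: set $u=z\cdot s_{n+1}(y^{-1})$. Since $y\in\wt G_n$, for $1\le i\le n$ and $\ve\in\{0,1\}$ one has $\pa_i^\ve u=\pa_i^\ve z\cdot s_n\pa_i^\ve(y^{-1})=1$, while $\pa_{n+1}^1u=y\cdot y^{-1}=1$ and $\pa_{n+1}^0u=x\cdot y^{-1}$; hence $u\in M_{n+1}(G)$ and $xy^{-1}\in\Img\pa_{n+1}^0$. Conversely, any $w\in M_{n+1}(G)$ with $\pa_{n+1}^0w=xy^{-1}$ automatically satisfies $\pa w=\left(\begin{smallmatrix}1&\cdots&1&xy^{-1}\\ 1&\cdots&1&1\end{smallmatrix}\right)$, giving $xy^{-1}\sim1$ and hence $[x]=[y]$ via Proposition \ref{moorekan}. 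This single degeneracy trick closes the gap; the remainder of your outline (well-definedness of the two maps, multiplicativity via $[x]\bl[y]=[xy]$, the case $n=0$) is fine and matches the paper.
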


\begin{proof}
Let us use the following notations:
\begin{align*}
&Z_0M(G)=M_0(G),\;Z_nM(G)=\Ker\left(M_n(G)\xto{\pa_n^0}M_{n-1}(G)\right),\ n>0,\\
&B_nM(G)=\Img\left(M_{n+1}(G)\xto{\pa_{n+1}^0}M_n(G)\right),\ n\ge0.
\end{align*}
It is evident that $Z_nM(G)=\wt{G}_n$ for all $n\ge 0$. Therefore,
in view of Proposition \ref{moorekan}, it suffices to show that $x\sim y$ if
and only if $xy^{-1}\in B_nM(G)$, $x,y\in\wt{G}_n=Z_nM(G)$. Suppose
that $x,y\in\wt{G}_n$ and $x\sim y$. Then
\begin{align*}
\pa z&=\begin{pmatrix} 1&\dots&1&x\\
1&\dots&1&y\end{pmatrix}
\intertext{for some $z\in G_{n+1}$. Let $u=z\cdot s_{n+1}\,y^{-1}$. %
One can easily check that}%
\pa u&=\begin{pmatrix} 1&\dots&1&xy^{-1}\\
1&\dots&1&1\end{pmatrix}.
\intertext{Hence $u\in M_{n+1}(G)$ and $\pa_{n+1}^0u=xy^{-1}$. That %
is $xy^{-1}\in B_nM(G)$. Conversely, assume that $x,y\in Z_nM(G)$ %
and $xy^{-1}\in B_nM(G)$. Then $\pa_{n+1}^0w=xy^{-1}$ for some $w\in %
M_{n+1}(G)$. Clearly,}%
\pa w&=\begin{pmatrix} 1&\dots&1&xy^{-1}\\
1&\dots&1&1\end{pmatrix}.
\end{align*}
This gives $[x]*[y^{-1}]=[xy^{-1}]=[1]$, whence $[x]=[y]$, i.~e., $x\sim y$.
\end{proof}

Any cubical group with connections is Kan [19]. Combining this with Theorem \ref{normtheo} and Proposition \ref{kanpimoore}, we get
\begin{coro}
For any cubical abelian group $G$ with connections, $\pi_n(G)$ is naturally isomorphic to $H_n(N(G))$ for all $n\ge0$.\hfill\qed
\end{coro}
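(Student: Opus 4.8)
The plan is to combine the three results already in hand: Tonks's theorem that every cubical group with connections is Kan, Proposition~\ref{kanpimoore} identifying the homotopy groups of a Kan cubical group with the homology of its Moore complex, and the cubical Moore theorem (Theorem~\ref{normtheo}) comparing $M$ with $N$. Let $G$ be a cubical abelian group with connections. Regarding $G$ merely as a cubical group and invoking Tonks's result, $G$ satisfies the Kan condition, so Proposition~\ref{kanpimoore} gives $\pi_n(G)=H_n(M(G))$ for every $n\ge0$. On the other hand $G$ is a cubical object with connections in the abelian category $\mathbf{Ab}$ of abelian groups, hence in particular a pseudocubical object with pseudoconnections there; so Theorem~\ref{normtheo} applies to $X=G$ and shows that $i_*\colon H_n(M(G))\to H_n(N(G))$ is an isomorphism. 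Composing these two identifications yields $\pi_n(G)\cong H_n(N(G))$ for all $n\ge0$.

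The remaining work is to check that this isomorphism is natural in $G$, where $G$ now ranges over cubical abelian groups with connections and their morphisms. A morphism here is in particular a homomorphism of cubical groups, hence carries the unit $1\in G_0$ to the unit and so is a based map of Kan cubical sets; thus $\pi_n(-)$ is a functor on these morphisms, and likewise $M(-)$ and $N(-)$ are functors to chain complexes. Naturality of $i_*$ is contained in Theorem~\ref{normtheo}, the monomorphism $i$ being natural. For naturality of the identification $\pi_n(G)=H_n(M(G))$ one reads it off from the proof of Proposition~\ref{kanpimoore}: the equality $Z_nM(G)=\wt{G}_n$ is manifestly natural in $G$, the criterion ``$x\sim y$ iff $xy^{-1}\in B_nM(G)$'' is preserved by cubical group homomorphisms, and under this identification the operation on $\pi_n(G)$ supplied by Proposition~\ref{moorekan} corresponds to the one induced by addition in $G$; so the bijection is in fact a natural isomorphism of abelian groups. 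Hence the composite is a natural isomorphism.

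The only point requiring care is this last one, namely confirming that Proposition~\ref{kanpimoore} supplies a \emph{natural} identification rather than merely a pointwise one; but since its proof produces the isomorphism through the explicit chain $\pi_n(G)=\wt{G}_n/\sim\,=Z_nM(G)/B_nM(G)=H_n(M(G))$, each step of which is visibly compatible with cubical group maps, this is routine. Everything else is the formal composition of natural transformations already established, so the corollary follows.
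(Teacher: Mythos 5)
Your proposal is correct and follows exactly the paper's route: Tonks's theorem to get the Kan condition, Proposition~\ref{kanpimoore} to identify $\pi_n(G)$ with $H_n(M(G))$, and Theorem~\ref{normtheo} to pass from $M(G)$ to $N(G)$. The extra care you take over naturality is sound and merely makes explicit what the paper leaves implicit.
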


\section{Cubical derived functors}

% 3.1
\begin{defi}
Let $\sA$ be a category and
$$
f_1^0,\dots,f_n^0,f_1^1,\dots,f_n^1:A\to B
$$
a sequence of $\sA$-morphisms, $n\ge 1$. A cubical kernel
of the sequence $( f_1^0,\dots,f_n^0,f_1^1,\dots,f_n^1)$ is a
sequence
$$
k_1^0,\dots,k_n^0,k_{n+1}^0,k_1^1,\dots,k_n^1,k_{n+1}^1:K\to A
$$
of $\sA$-morphisms such that
\begin{itemize}
\item[(i)]
$f_i^\om k_j^\al=f_{j-1}^\al k_i^\om$ for $1\le i<j\le n+1$,
$\om,\al\in\{0,1\}$;
\item[]
\item[(ii)]
if $h_1^0,\dots,h_n^0,h_{n+1}^0,h_1^1,\dots,h_n^1,h_{n+1}^1:D\to A$ is any other sequence satisfying identities $f_i^\om h_j^\al=f_{j-1}^\al
h_i^\om$ for $1\le i<j\le n+1$, $\om,\al\in\{0,1\}$, then there
exists a unique $\sA$-morphism $h:D\to K$ with $k_i^\om
h=h_i^\om$,  $1\le i\le n+1$, $\om\in\{0,1\}$.
\end{itemize}
\end{defi}

It  immediately follows from the definition that cubical kernels are unique up to isomorphism if they exist.

Suppose $\sA$ has finite limits and let
$\xymatrix{A\ar@<0.5ex>[r]^-{f_1^0}\ar@<-0.5ex>[r]_-{f_1^1}&B}$ be a
pair of $\sA$-morphisms. Consider the diagram
$$\xymatrix{B&A\ar[l]_-{f_1^1}\ar[r]^-{f_1^0}&B\\
A\ar[u]^-{f_1^0}\ar[d]_-{f_1^1}&&A\ar[u]_-{f_1^0}\ar[d]^-{f_1^1}\\
B&A\ar[l]_-{f_1^1}\ar[r]^-{f_1^0}&B.}$$
By assumption, we have the limit diagram
$$\xymatrix{B&A\ar[l]_-{f_1^1}\ar[r]^-{f_1^0}&B\\
A\ar[u]^-{f_1^0}\ar[d]_-{f_1^1}&K\ar[l]_-{k_2^1}\ar[u]^-{k_1^0}
\ar[r]^-{k_2^0}\ar[d]_-{k_1^1}&A\ar[u]_-{f_1^0}\ar[d]^-{f_1^1}\\
B&A\ar[l]_-{f_1^1}\ar[r]^-{f_1^0}&B,}$$ i.e. the sequence
$k_1^0,k_2^0,k_1^1,k_2^1:K\to A$ is a cubical kernel of the pair
$$
\xymatrix{A\ar@<0.5ex>[r]^-{f_1^0}\ar@<-0.5ex>[r]_-{f_1^1}&B}.
$$
Clearly, in fact, one has

% 3.2
\begin{prop}
If $\sA$ admits finite limits, then cubical kernels exist in $\sA$
for any sequence $(f_1^0,\dots,f_n^0,f_1^1,\dots,f_n^1)$ and any
$n\ge 1$.\hfill\qed
\end{prop}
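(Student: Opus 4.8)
The plan is to realize a cubical kernel as the limit of a suitable finite diagram, so that its existence becomes an immediate consequence of the hypothesis that $\sA$ admits finite limits. This matches the style of the $n=1$ case worked out just before the statement.

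Fix $n\ge1$ and morphisms $f_1^0,\dots,f_n^0,f_1^1,\dots,f_n^1:A\to B$. I would first write down an index category $\mathcal{I}$ for a diagram $D:\mathcal{I}\to\sA$ as follows. The objects of $\mathcal{I}$ are symbols $a_{i,\om}$ for $1\le i\le n+1$, $\om\in\{0,1\}$, together with symbols $b_{i,\om,j,\al}$ for $1\le i<j\le n+1$, $\om,\al\in\{0,1\}$; the only non-identity arrows are, for each such quadruple, one arrow $a_{j,\al}\to b_{i,\om,j,\al}$ and one arrow $a_{i,\om}\to b_{i,\om,j,\al}$. On objects set $D(a_{i,\om})=A$ and $D(b_{i,\om,j,\al})=B$, and send the two displayed arrows into $b_{i,\om,j,\al}$ to the morphisms $f_i^\om$ and $f_{j-1}^\al$ respectively; these are well-defined since $1\le i\le n$ and $1\le j-1\le n$ whenever $1\le i<j\le n+1$. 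As $n$ is finite, $\mathcal{I}$ is a finite category, so by hypothesis $D$ has a limit $K$ in $\sA$.

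Next I would unwind what a cone over $D$ is. Since $D$ has no arrows out of the objects $b_{i,\om,j,\al}$, a cone with vertex $C$ is the same as a family of morphisms $c_{i,\om}:C\to A$ ($1\le i\le n+1$, $\om\in\{0,1\}$) together with morphisms $c_{i,\om,j,\al}:C\to B$ satisfying $f_i^\om c_{j,\al}=c_{i,\om,j,\al}=f_{j-1}^\al c_{i,\om}$. Eliminating the forced components $c_{i,\om,j,\al}$, such cones correspond bijectively, naturally in $C$, to families $(c_{i,\om})$ satisfying $f_i^\om c_{j,\al}=f_{j-1}^\al c_{i,\om}$ for all $1\le i<j\le n+1$, $\om,\al\in\{0,1\}$ — that is, exactly to the data of condition (i) in the definition of a cubical kernel. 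I would then take $k_i^\om$ to be the limit projection $K\to D(a_{i,\om})=A$. Condition (i) holds because the limiting cone is in particular a cone; condition (ii) is precisely the universal property of $K=\lim D$ applied to the cone attached to an arbitrary family $(h_i^\om)$ satisfying the same identities, and uniqueness of the mediating morphism $h$ is the uniqueness clause of that universal property. Hence $K$, equipped with the $k_i^\om$, is a cubical kernel of $(f_1^0,\dots,f_n^0,f_1^1,\dots,f_n^1)$.

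I do not anticipate a genuine obstacle: the argument is essentially a translation of the defining equations of a cubical kernel into a limit cone. The only points requiring care are bookkeeping — checking that all occurring lower indices stay within $\{1,\dots,n\}$ so that the $f_i^\om$ and $f_{j-1}^\al$ are defined, and verifying that cones over $D$ are literally the same as families satisfying (i) (equivalently, that the auxiliary $B$-valued components are forced and impose no additional constraint). For $n=1$ this recovers exactly the $3\times3$ limit diagram displayed before the statement.
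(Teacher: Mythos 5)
Your proof is correct and is exactly the argument the paper intends: the paper displays the limit diagram for $n=1$ and then asserts the general case with ``clearly, in fact, one has,'' leaving the reader to generalize that $3\times3$ diagram to a finite diagram with one copy of $A$ for each pair $(i,\om)$ and one copy of $B$ for each quadruple $(i,\om,j,\al)$ with $i<j$. You have simply supplied the bookkeeping (index ranges, identification of cones over the diagram with families satisfying condition (i)) that the paper omits.
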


Let $\cP$ be a class of objects of $\sA$. Recall [7] that
an $\sA$-morphism $f:A\to A'$ is said to be
$\cP$-epimorphic iff
$$\Hom_{\sA}(Q,f): \Hom_{\sA}(Q,A)\to\Hom_{\sA}(Q,A')$$
is surjective for all $Q\in\cP$. Also recall that $\cP$ is called a
projective class if for each $A\in\sA$ there exists a
$\cP$-epimorphism $e:Q\to A$ with $Q\in\cP$.

Let $\cP$ be a projective class in $\sA$,
$\xymatrix{X\ar[r]^-\pa&A}$ an augmented precubical object over
$A\in\sA$, and suppose that $\sA$ has finite
limits. By Proposition 3.2, we have a factorization
$$
X:\
\xymatrix@!C=1.8em@R=3em@M=1.2ex@L=.2ex{ \cdots\
X_{n+1}\ar@<1.5ex>[rr]^-{\pa_1^0}\ar@<-.9ex>[rr]_-{\pa_{n+1}^1}
\ar[dr]_-{e_{n+1}} &{{}^{{}^{\vdots}}}&\ X_n\
\ar@<1.5ex>[rr]^-{\pa_1^0}\ar@<-.9ex>[rr]_-{\pa_n^1}\ar[dr]_-{e_n}
&{{}^{{}^{\vdots}}}&**[r]X_{n-1}\ \cdots&\ \
X_1\ar@<.5ex>[rr]^-{\pa_1^0}\ar@<-.5ex>[rr]_-{\pa_1^1}\ar[dr]_-{e_1}
&&X_0\ar[r]^-{\pa}&A
\\
&K_{n+1}\ar@<1.2ex>[ur]^-{k_1^0}\ar@{}[ur]|-\ddots\ar@<-1.5ex>[ur]_-{k_{n+1}^1}
&&K_n\ar@<1.2ex>[ur]^-{k_1^0}\ar@{}[ur]|-\ddots\ar@<-1.5ex>[ur]_-{k_n^1}
&&&K_1\ar@<.5ex>[ur]^-{k_1^0}\ar@<-.5ex>[ur]_-{k_1^1}}
$$
where
$\xymatrix@1{K_1\ar@<0.5ex>[r]^-{k_1^0}\ar@<-0.5ex>[r]_-{k_1^1}&X_0}$
is a kernel pair of $\xymatrix{X_0\ar[r]^-\pa&A}$, and\vskip-1em
$$
k_1^0,\dots,k_n^0,k_1^1,\dots,k_n^1:K_n\to X_{n-1}
$$ a cubical kernel of $(\pa_1^0,\dots,\pa_{n-1}^0,\pa_1^1,\dots,\pa_{n-1}^1)$ for $n\ge
2$. We say that
\begin{itemize}
\item[1)] $\xymatrix{X\ar[r]^-\pa&A}$ is $\cP$-projective iff each
$X_n\in\cP$;
\item[2)] $\xymatrix{X\ar[r]^-\pa&A}$ is $\cP$-exact iff
$\pa$ and $e_n$ $(n\ge 1)$ are $\cP$-epimorphic;
\item[3)] $\xymatrix{X\ar[r]^-\pa&A}$ is $\cP$-projective resolution of $A$
iff it is $\cP$-projective and $\cP$-exact.
\end{itemize}

Obviously, if $\sA$ is a category with finite limits and a
projective class $\cP$, then each $A\in\sA$ has a
$\cP$-projective resolution. Moreover, the following comparison
theorem shows that such a resolution is unique up to precubical
homotopy equivalence.

% 3.3
\begin{theo}\label{precm}
Let $\xymatrix{X\ar[r]^-\pa&A}$ be $\cP$-projective and
$\xymatrix{X'\ar[r]^-{\pa'}&A'}$ be $\cP$-exact. Then any $\sA$-morphism $f:A\to A'$ can be extended to a precubical morphism
$$\xymatrix{X\ar[rr]^-\pa\ar[dd]_-{\ol{f}}&&A\ar[dd]^-f\\
&&&&\\
X'\ar[rr]^-{\pa'}&& A'}$$ over $f$ $($i.~e., $\ol{f}$ and $f$ form a
morphism of augmented precubical objects$)$. Furthermore, any two
such extensions are precubically homotopic. That is, if
$\ol{f},\ol{g}:X\to X'$ are two extensions of $f$, then there exist
$\sA$-morphisms $h_n:X_n\to X'_{n+1}$, $n\ge 0$, such that
\begin{align*}
&\pa_1^0h_n=f_n,\quad\pa_1^1h_n=g_n,\quad n\ge 0,\\
&\pa_i^\ve h_n=h_{n-1}\pa_{i-1}^\ve,\;\;n\ge 1,\;\;1<i\le n+1,\;\;\ve\in\{0,1\}.
\end{align*}
\end{theo}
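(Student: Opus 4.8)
The plan is to establish both halves by induction on the cubical degree, following the classical comparison-theorem pattern. Three ingredients are used repeatedly: that each $X_n$ lies in $\cP$; that $\pa'$ and all the $e'_n$ $(n\ge1)$ coming from the factorization of $X'\xto{\pa'}A'$ are $\cP$-epimorphic (this is precisely $\cP$-exactness); and the universal property of the cubical kernels $K'_n$ of $X'$ together with the comparison morphisms $e'_n\colon X'_n\to K'_n$, which satisfy $k_i^\al e'_n=\pa_i^\al$. Throughout I write the faces of $X'$ again as $\pa_i^\al$ and the cubical-kernel projections of $X'$ as $k_i^\al$, context indicating which object is meant; all the cubical kernels involved exist because $\sA$ has finite limits.

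For existence, I would first lift $f\pa\colon X_0\to A'$ through the $\cP$-epimorphism $\pa'\colon X'_0\to A'$, using $X_0\in\cP$, to obtain $f_0$ with $\pa'f_0=f\pa$. Assume $f_0,\dots,f_{n-1}$ have been built so that each commutes with all face operators, and set $g_i^\ve:=f_{n-1}\pa_i^\ve\colon X_n\to X'_{n-1}$ for $1\le i\le n$. Combining the precubical identities in $X$ with the relations $\pa_k^\al f_{n-1}=f_{n-2}\pa_k^\al$ already available, one gets
$$
\pa_i^\al g_j^\ve=f_{n-2}\pa_i^\al\pa_j^\ve=f_{n-2}\pa_{j-1}^\ve\pa_i^\al=\pa_{j-1}^\ve g_i^\al,\qquad 1\le i<j\le n,
$$
which for $n=1$ reduces to the augmentation identity $\pa'f_0\pa_1^0=f\pa\pa_1^0=f\pa\pa_1^1=\pa'f_0\pa_1^1$, so that there one works with the kernel pair $K'_1$ of $\pa'$. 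Hence the universal property of the cubical kernel $K'_n$ gives a unique $t\colon X_n\to K'_n$ with $k_i^\ve t=g_i^\ve$; lifting $t$ through the $\cP$-epimorphism $e'_n\colon X'_n\to K'_n$, again using $X_n\in\cP$, produces $f_n$ with $e'_nf_n=t$, whence $\pa_i^\ve f_n=k_i^\ve e'_nf_n=g_i^\ve=f_{n-1}\pa_i^\ve$, as required. This builds the precubical morphism $\ol f$ over $f$.

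For the homotopy between two extensions $\ol f,\ol g$ of $f$, I would construct $h_n\colon X_n\to X'_{n+1}$ by induction on $n$. For $n=0$, since $\pa'f_0=f\pa=\pa'g_0$ the pair $(f_0,g_0)$ factors through the kernel pair $K'_1$ of $\pa'$, and lifting the resulting morphism through $e'_1$ (using $X_0\in\cP$) yields $h_0$ with $\pa_1^0h_0=f_0$ and $\pa_1^1h_0=g_0$. For $n\ge1$, put $\phi_1^0=f_n$, $\phi_1^1=g_n$ and $\phi_i^\ve=h_{n-1}\pa_{i-1}^\ve$ for $2\le i\le n+1$ — all morphisms $X_n\to X'_n$ — and check the cubical-kernel compatibility $\pa_i^\al\phi_j^\ve=\pa_{j-1}^\ve\phi_i^\al$ for $1\le i<j\le n+1$. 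In the case $i=1$ one uses that $\ol f$ and $\ol g$ are precubical morphisms together with the relations $\pa_1^0h_{n-1}=f_{n-1}$ and $\pa_1^1h_{n-1}=g_{n-1}$; in the case $2\le i$ one uses the precubical identities in $X$ together with the already-established $\pa_i^\ve h_{n-1}=h_{n-2}\pa_{i-1}^\ve$. The universal property of $K'_{n+1}$ then produces a unique $t\colon X_n\to K'_{n+1}$ with $k_i^\ve t=\phi_i^\ve$, and lifting $t$ through $e'_{n+1}$ (using $X_n\in\cP$) gives $h_n$ with exactly the desired face identities.

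The genuinely delicate point is the index bookkeeping in the homotopy step: since the $i$-th face of $h_n$ is made to correspond to the $(i-1)$-st face of $X$, checking $\pa_i^\al\phi_j^\ve=\pa_{j-1}^\ve\phi_i^\al$ forces one to isolate the boundary case $i=1$ — where $h_n$ is pinned to $f_n$ and $g_n$ rather than to a lower homotopy — from the generic case $i\ge2$, and in the generic case one must invoke the precubical identity in $X$ with correctly shifted indices. Apart from that the argument is a routine diagram chase; in particular it uses neither degeneracies nor connections nor additivity, only finite limits and the projective class.
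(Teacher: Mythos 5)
Your proposal is correct and follows essentially the same route as the paper's proof: an induction on degree in which the candidate faces are checked to satisfy the cubical-kernel compatibility, factored through $K'_n$ (resp.\ $K'_{n+1}$, with $K'_1$ the kernel pair of $\pa'$), and then lifted through the $\cP$-epimorphism $e'_n$ using $X_n\in\cP$; the same case split $i=1$ versus $i\ge 2$ appears in the paper's homotopy step. The only cosmetic difference is that the paper treats $f_0$ and $f_1$ as separate base cases, whereas you fold $f_1$ into the general inductive step.
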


\begin{proof}
We construct the extension $\ol{f}=(f_n:X_n\to X'_n)$ and show
its uniqness up to precubical homotopy by induction on $n$. Since
$X_0$ is $\cP$-projective and $\pa':X'_0\to A'$ is
$\cP$-epimorphic, there is $f_0:X_0\to X'_0$ with $\pa'f_0=f\pa$.
Next, one has $\pa' f_0\pa_1^0=f\pa \pa_1^0=f\pa\pa_1^1=\pa'
f_0\pa_1^1$. Therefore $ f_0\pa_1^0=k_1^0\vf_1$ and
$f_0\pa_1^1=k_1^1\vf_1$ for a uniquely defined $\vf_1:X_1\to K'_1$.
As $X_1$ is $\cP$-projective and $e_1:X'_1\to K'_1$ is
$\cP$-epimorphic, there exists  $f_1:X_1\to X'_1$ with
$e_1f_1=\vf_1$, and we have
$\pa_1^0f_1=k_1^0e_1f_1=k_1^0\vf_1=f_0\pa_1^0$ and
$\pa_1^1f_1=k_1^1e_1f_1=k_1^1\vf_1=f_0\pa_1^1$. Thus $f_0$ and $f_1$
are constructed. Inductively, suppose given $\sA$-morphisms
$f_r:X_r\to X'_r$ for $r\le n$ so that $\pa_i^\om
f_r=f_{r-1}\pa_i^\om$, $1\le i\le r$, $\om\in\{0,1\}$. Then
$\pa_i^\om f_n\pa_j^\al=f_{n-1}\pa_i^\om\pa_j^\al=f_{n-1}\pa_{j-1}^\al\pa_i^\om=\pa_{j-1}^\al
f_n\pa_i^\om$, $1\le i<j\le n+1$, $\om,\al\in\{0,1\}$. Hence there
is a unique $\vf_{n+1}:X_{n+1}\to K'_{n+1}$ such that
$k_i^\om\vf_{n+1}=f_n\pa_i^\om$, $1\le i\le n+1$, $\om\in\{0,1\}$.
Since $X_{n+1}$ is $\cP$-projective  and $e_{n+1}:X'_{n+1}\to
K'_{n+1}$ is $\cP$-epimorphic, there exists  $f_{n+1}:X_{n+1}\to
X'_{n+1}$ with $e_{n+1}f_{n+1}=\vf_{n+1}$. Then we have  $\pa_i^\om
f_{n+1}=k_i^\om e_{n+1}f_{n+1}=k_i^\om\vf_{n+1}=f_n\pa_i^\om$, $1\le
i\le n+1$, $\om\in\{0,1\}$. This completes the inductive step and
proves the existence of $\ol{f}$.

Now suppose $\ol{g}=(g_n:X_n\to X'_n)$ is another extension of $f:A\to A'$. We want to construct $h=(h_n:X_n\to X'_{n+1})$ with $\pa_1^0h_n=f_n$, $\pa_1^1h_n=g_n$, $n\ge 0$, and  $\pa_i^\ve h_n=h_{n-1}\pa_{i-1}^\ve$, $1<i\le n+1$, $\ve\in\{0,1\}$. For $n=0$ consider diagram
$$
\xymatrix@L=.2ex{&&**[r]{\ X_0}\ar@{-->}[ddll]_-{h_0}\ar@{-->}[dddl]_-{q_0}
\ar[rr]^-\pa\ar@<1.2ex>[dd]^-{g_0}\ar@<.2ex>[dd]_-{f_0}&&A\ar[dd]^-f\\
\\
X'_1\ar[dr]_-{e_1}&&**[r]{\ X'_0}\ar[rr]^-{\pa'}&&**[r]A'\;\;.\\
&**[r]K'_1\ar@<-1ex>[ur]^-{k_1^0}\ar@<-2ex>[ur]_-{k_1^1}&&
}
$$
As $\pa' f_0=\pa' g_0$, there exists $q_0:X_0\to K'_1$ such that
$k_1^0q_0=f_0$ and $k_1^1q_0=g_0$. Next, since $X_0$ is
$\cP$-projective and $e_1$ is $\cP$-epimorphic, there is $h_0:X_0\to
X'_1$ with $e_1h_0=q_0$. This and the two previous equalities give
$\pa_1^0h_0=k_1^0e_1h_0=k_1^0q_0=f_0$ and
$\pa_1^1h_0=k_1^1e_1h_0=k_1^1q_0=g_0$. Thus $h_0$ is constructed.
Inductively, suppose given $h_0,h_1,\dots,h_{n-1}$ with the required
properties.
$$
\xymatrix@L=.2ex@M=2ex{&&**[r]X_n\ar@{-->}[ddll]_-{h_n}\ar@{-->}[dddl]_-{q_n}
\ar@<1.5ex>[rr]^-{\pa_1^0}\ar@<-1ex>[rr]_-{\pa_n^1}
\ar@<1.4ex>[dd]^-{g_n}\ar@<-0.2ex>[dd]_-{f_n}
&{{}^{{}^{\vdots}}}&**[r]X_{n-1}
\ar@<-.2ex>[dd]_-{f_{n-1}}\ar@<1.4ex>[dd]^-{g_{n-1}}
\ar@<0.7ex>[ddll]_{h_{n-1}}\\
\\
**[r]X'_{n+1}\ar[dr]_-{e_{n+1}}&&**[r]X'_n
\ar@<1.5ex>[rr]^-{\pa_1^0}\ar@<-1ex>[rr]_-{\pa_n^1}
&{{}^{{}^{\vdots}}}&**[r]X'_{n-1}\ .\\
&**[r]K'_{n+1}
\ar@<1.1ex>[ur]^-{k_1^0}\ar@{}[ur]|-\ddots\ar@<-1.6ex>[ur]_-{k_{n+1}^1}
}
$$
Define $\psi_i^\ve:X_n\to X'_n$, $1\le i\le n+1$, $\ve\in\{0,1\}$,
as follows:
$$\psi_1^0=f_n,\;\;\psi_1^1=g_n,\;\;\psi_i^\ve=h_{n-1}\pa_{i-1}^\ve,\;\;1<i\le n+1,\;\;\ve\in\{0,1\}.$$
By the induction assumption,
\all
\begin{align*}
\pa_1^0\psi_j^\ve&=\pa_1^0h_{n-1}\pa_{j-1}^\ve=f_{n-1}\pa_{j-1}^\ve\pa_{j-1}^\ve f_n=\pa_{j-1}^\ve\psi_1^0,\\
&1<j\le n+1,\quad\ve\in\{0,1\},\quad n\ge 1,\\
\pa_1^1\psi_j^\ve&=\pa_1^1h_{n-1}\pa_{j-1}^\ve=g_{n-1}\pa_{j-1}^\ve\pa_{j-1}^\ve g_n=\pa_{j-1}^\ve\psi_1^1,\\
&1<j\le n+1,\quad\ve\in\{0,1\},\quad n\ge 1,\\
\pa_i^\al\psi_j^\ve&=\pa_i^\al h_{n-1}\pa_{j-1}^\ve=h_{n-2}\pa_{i-1}^\al\pa_{j-1}^\ve=h_{n-2}\pa_{j-2}^\ve \pa_{i-1}^\al=\pa_{j-1}^\ve h_{n-1}\pa_{i-1}^\al\\
&=\pa_{j-1}^\ve\psi_i^\al,\quad
1<i<j\le n+1,\quad\al,\ve\in\{0,1\},\quad n>1.
\end{align*}
That is $\pa_i^\al\psi_j^\ve=\pa_{j-1}^\ve\psi_i^\al$ for $1\le i<j\le n+1$, $\al,\ve\in\{0,1\}$. Therefore  $k_i^\ve q_n=\psi_i^\ve$,  $1\le i\le n+1$, $\ve\in\{0,1\}$, for a uniquely defined $q_n:X_n\to K'_{n+1}$. Since $X_n$ is $\cP$-projective and $e_{n+1}$ is $\cP$-epimorphic, there is $h_n:X_n\to X'_{n+1}$ with $e_{n+1}h_n=q_n$. Now we have
\begin{align*}
&\pa_1^0h_n=k_1^0e_{n+1}h_n=k_1^0q_n=\psi_1^0=f_n,\\
&\pa_1^1h_n=k_1^1e_{n+1}h_n=k_1^1q_n=\psi_1^1=g_n,\\
&\pa_i^\ve h_n=k_i^\ve e_{n+1}h_n=k_i^\ve q_n=\psi_i^\ve=h_{n-1}\pa_{i-1}^\ve,\\
&\quad\quad\quad 1<i\le n+1,\quad\ve\in\{0,1\}.
\end{align*}
This finishes the inductive step and completes the proof of the theorem.
\end{proof}

The following theorem is crucial for constructing our cubical
derived functors.

% 3.4
\begin{theo}\label{resdeg}
Suppose that
$$
P:\ \xymatrix@!C=1.8em@R=3em@M=1.2ex@L=.2ex{ \cdots\
P_{n+1}\ar@<1.5ex>[rr]^-{\pa_1^0}\ar@<-.9ex>[rr]_-{\pa_{n+1}^1}
\ar[dr]_-{e_{n+1}} &{{}^{{}^{\vdots}}}&\ P_n\
\ar@<1.5ex>[rr]^-{\pa_1^0}\ar@<-.9ex>[rr]_-{\pa_n^1}\ar[dr]_-{e_n}
&{{}^{{}^{\vdots}}}&**[r]P_{n-1}\ \cdots&\ \
P_1\ar@<.5ex>[rr]^-{\pa_1^0}\ar@<-.5ex>[rr]_-{\pa_1^1}\ar[dr]_-{e_1}
&&P_0\ar[r]^-{\pa}&A
\\
&K_{n+1}\ar@<1.2ex>[ur]^-{k_1^0}\ar@{}[ur]|-\ddots\ar@<-1.5ex>[ur]_-{k_{n+1}^1}
&&K_n\ar@<1.2ex>[ur]^-{k_1^0}\ar@{}[ur]|-\ddots\ar@<-1.5ex>[ur]_-{k_n^1}
&&&K_1\ar@<.5ex>[ur]^-{k_1^0}\ar@<-.5ex>[ur]_-{k_1^1}}
$$
is a $\cP$-projective resolution of $A\in\sA$. Then:

{\rm(a)} \ $P$ has pseudodegeneracy operators, i.e., there exist $s_i:P_n\to P_{n+1}$, \ $n\ge 0$, \ $1\le i\le n+1$, satisfying
\begin{align*}
\pa_i^\al s_j&=\begin{cases}s_{j-1}\pa_i^\al\quad&i<j,\\
\id\quad&i=j,\\
s_j\pa_{i-1}^\al\quad&i>j,
\end{cases}
\intertext{where $\al\in\{0,1\}$.}%
\intertext{{\rm(b)} For any
pseudodegeneracy operators $s_i:P_n\to P_{n+1}$, $n\ge 0$, $1\le
i\le n+1$, of $P$, there exist $\Gm_i:P_n\to P_{n+1}$, $n\ge 1$,
$1\le i\le n$, satisfying}
\pa_i^\al \Gm_j&=\begin{cases}\Gm_{j-1}\pa_i^\al\quad&i<j,\;\;\al\in\{0,1\},\\
\id\quad&i=j,\;j+1,\;\;\al=0,\\
s_j\pa_j^\al\quad&i=j,\;j+1,\;\;\al=1,\\
\Gm_j\pa_{i-1}^\al\quad&i>j+1,\;\;\al\in\{0,1\}.
\end{cases}
\end{align*}
\end{theo}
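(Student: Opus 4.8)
The plan is to construct both families of operators one dimension at a time, exploiting the only structure the resolution provides: every $P_n$ lies in $\cP$, every $e_n\colon P_n\to K_n$ is $\cP$-epimorphic, and $\pa_i^\om=k_i^\om e_n$ on each $P_n$. The basic device is a \emph{lifting criterion}: to produce an $\sA$-morphism $f\colon P_n\to P_{n+1}$ with prescribed faces it suffices to exhibit $\sA$-morphisms $h_1^0,\dots,h_{n+1}^0,h_1^1,\dots,h_{n+1}^1\colon P_n\to P_n$ satisfying the cubical-kernel compatibility
$$
\pa_p^\om h_q^\al=\pa_{q-1}^\al h_p^\om\qquad (1\le p<q\le n+1,\ \om,\al\in\{0,1\})
$$
(for $n=0$, just the single relation $\pa h_1^0=\pa h_1^1$, with $\pa\colon P_0\to A$). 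Indeed, the universal property of the cubical kernel $K_{n+1}$ (resp.\ of the kernel pair $K_1$) then yields $\vf\colon P_n\to K_{n+1}$ with $k_p^\om\vf=h_p^\om$, and $\cP$-projectivity of $P_n$ together with $\cP$-epimorphy of $e_{n+1}$ lifts $\vf$ to $f\colon P_n\to P_{n+1}$ with $e_{n+1}f=\vf$, whence $\pa_p^\om f=k_p^\om e_{n+1}f=k_p^\om\vf=h_p^\om$.

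For part (a) I would build the $s_j\colon P_n\to P_{n+1}$ ($1\le j\le n+1$) by induction on $n$. For $n=0$ take $h_1^0=h_1^1=\id_{P_0}$ — which trivially satisfies $\pa h_1^0=\pa h_1^1$ — to get $s_1$ with $\pa_1^\al s_1=\id$. In the inductive step one has the $s_i\colon P_{n-1}\to P_n$, fixes $j$, and \emph{defines} the face family by the right-hand sides of the desired identity: $h_i^\al:=s_{j-1}\pa_i^\al$ for $i<j$, $h_j^\al:=\id_{P_n}$, $h_i^\al:=s_j\pa_{i-1}^\al$ for $i>j$; each degeneracy occurring here lives at level $n-1\to n$ and is available by induction. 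One then checks $\pa_p^\om h_q^\al=\pa_{q-1}^\al h_p^\om$ for $p<q$, a short case analysis on the position of the pair $p<q$ relative to $j$ (the cases $p<q<j$, $p<q=j$, $p<j<q$, $p=j<q$, $j<p<q$), each settled by one use of the precubical identity and the level-$(n-1)$ pseudodegeneracy identities. The lifting criterion then produces $s_j$ with faces exactly as wanted; since a pseudocubical structure on $P$ demands nothing beyond the precubical identities (already present) and precisely these face identities, part (a) follows.

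Part (b) is entirely parallel: with the pseudodegeneracies $s_i$ now fixed, I would construct the $\Gm_j\colon P_n\to P_{n+1}$ ($1\le j\le n$) by induction on $n\ge1$, defining at each stage the face family of $\Gm_j$ by the right-hand sides of the target identities — $h_i^\al:=\Gm_{j-1}\pa_i^\al$ for $i<j$; $h_i^0:=\id_{P_n}$ and $h_i^1:=s_j\pa_j^1$ for $i=j,j+1$; $h_i^\al:=\Gm_j\pa_{i-1}^\al$ for $i>j+1$ — all operators appearing being available at level $n-1\to n$ (the constraints $1\le j\le n$, and $j\le n-1$ wherever $i>j+1$ occurs, guarantee that $\Gm_{j-1},\Gm_j,s_j$ exist there). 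The base case $n=1$, $j=1$ uses $h_1^0=h_2^0=\id_{P_1}$, $h_1^1=h_2^1=s_1\pa_1^1$, and the only compatibility, $\pa_1^\om h_2^\al=\pa_1^\al h_1^\om$, collapses via $\pa_1^\om s_1=\id$. For the inductive step one checks $\pa_p^\om h_q^\al=\pa_{q-1}^\al h_p^\om$ for $p<q$ just as in (a); since $\{j,j+1\}$ is now the block of distinguished indices, the check splits into the eight cases given by the position of $p<q$ relative to $\{j,j+1\}$, each a one- or two-line manipulation with the precubical identities, the pseudodegeneracy identities from (a), and the level-$(n-1)$ pseudoconnection identities. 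The lifting criterion then gives $\Gm_j$, completing the induction; as pseudoconnections are required to satisfy only these face identities, part (b) follows.

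The single laborious ingredient is the compatibility bookkeeping — in particular the eight-case verification in (b) — which is the main obstacle to writing the proof out in full; it involves no idea beyond the identities already at hand. It is worth stressing that no coherence among the $s_j$'s, nor among the $\Gm_j$'s, nor between connections and degeneracies is ever invoked: the definitions of pseudocubical object and of pseudoconnections impose the face identities and nothing else, which is exactly why the construction survives on a resolution carrying no degeneracies a priori.
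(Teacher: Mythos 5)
Your proposal is correct and follows essentially the same route as the paper: for each fixed $j$ you prescribe the would-be faces of $s_j$ (resp.\ $\Gm_j$) by the right-hand sides of the target identities, verify the cubical-kernel compatibility relations using the precubical identities and the operators already constructed at the previous level, factor through $K_{n+1}$ by its universal property, and lift through the $\cP$-epimorphism $e_{n+1}$ using $\cP$-projectivity of $P_n$. The paper likewise leaves the compatibility case analysis to the reader, so the level of detail matches as well.
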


As an immediate consequence we have

% 3.5
\begin{coro}\label{psec}
Any $\cP$-projective resolution $\xymatrix{P\ar[r]^-{\pa}&A}$ is an augmented pseudocubical object with pseudoconnections.\hfill\qed
\end{coro}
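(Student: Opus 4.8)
The plan is to establish (a) and (b) separately, each by induction on $n$, following the same strategy as the proof of Theorem~\ref{precm}. The point is that the face operators of the resolution factor as $\pa_i^\al=k_i^\al e_{n+1}$ ($1\le i\le n+1$) with $e_{n+1}\colon P_{n+1}\to K_{n+1}$ a $\cP$-epimorphism and $K_{n+1}$ the cubical kernel of the faces of $P_n$. Hence, to build a morphism $P_n\to P_{n+1}$ whose composites with the faces $\pa_i^\al$ form a prescribed system $\psi_i^\al\colon P_n\to P_n$, it is enough to: (i) verify the compatibility relations $\pa_i^\al\psi_l^\ve=\pa_{l-1}^\ve\psi_i^\al$ for $1\le i<l\le n+1$, which, by the universal property of $K_{n+1}$, yield a unique $\sigma\colon P_n\to K_{n+1}$ with $k_i^\al\sigma=\psi_i^\al$; and then (ii) lift $\sigma$ along $e_{n+1}$, which is possible since $P_n\in\cP$. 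The resulting lift has the prescribed composites with the $\pa_i^\al$.

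For (a), the base case $n=0$ requires $s_1\colon P_0\to P_1$ with $\pa_1^0 s_1=\pa_1^1 s_1=\id_{P_0}$: the kernel-pair property of $K_1$ over $\pa\colon P_0\to A$ gives a diagonal $\Delta\colon P_0\to K_1$ with $k_1^\al\Delta=\id_{P_0}$, and any lift of $\Delta$ along $e_1$ serves as $s_1$. For the inductive step, with $s_j$ available in all lower degrees, fix $j$ with $1\le j\le n+1$ and take $\psi_i^\al=s_{j-1}\pa_i^\al$ for $i<j$, $\psi_j^\al=\id_{P_n}$, and $\psi_i^\al=s_j\pa_{i-1}^\al$ for $i>j$. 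The relations $\pa_i^\al\psi_l^\ve=\pa_{l-1}^\ve\psi_i^\al$ are then checked by distinguishing the positions of $i<l$ relative to $j$ (both $<j$; $l=j$; $i<j<l$; $i=j<l$; $j<i<l$), reducing both sides to a common form using the precubical identity in $P_n$ and the pseudodegeneracy identities in lower degrees; steps (i)--(ii) then yield $s_j$.

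For (b) one argues the same way, now keeping the given pseudodegeneracies $s_i$ of $P$ fixed and using the connections $\Gm_j$ already constructed in lower degrees. The base case $n=1$ gives $\Gm_1\colon P_1\to P_2$ from the single relation $\pa_1^\al\psi_2^\ve=\pa_1^\ve\psi_1^\al$, where $\psi_1^0=\psi_2^0=\id_{P_1}$ and $\psi_1^1=\psi_2^1=s_1\pa_1^1$. For the inductive step, fix $j$ with $1\le j\le n$ and set $\psi_i^\al=\Gm_{j-1}\pa_i^\al$ for $i<j$; $\psi_j^0=\psi_{j+1}^0=\id_{P_n}$ and $\psi_j^1=\psi_{j+1}^1=s_j\pa_j^1$; and $\psi_i^\al=\Gm_j\pa_{i-1}^\al$ for $i>j+1$. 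One then verifies $\pa_i^\al\psi_l^\ve=\pa_{l-1}^\ve\psi_i^\al$ by case analysis on the position of $i<l$ relative to the pair $\{j,j+1\}$, reducing each side via the precubical identities in $P_n$, the pseudodegeneracy identities, and the pseudoconnection identities in lower degrees; steps (i)--(ii) then give $\Gm_j$.

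I expect the one genuine difficulty to be organizational rather than conceptual: correctly enumerating the several cases in the two compatibility checks, and handling the extreme indices $j=1$ and $j=n+1$ in (a), and $j=1$ and $j=n$ in (b), and small $n$, where some cases are vacuous and one must keep the operators appearing on the right-hand sides (such as $s_{j-2}$ or $\Gm_{j-2}$) within their defined ranges. The most delicate single case is (b) with $i=j$, $l=j+1$, where all four choices of $(\al,\ve)$ must be examined and the identities $\pa_j^0 s_j=\id$ and $\pa_j^\al\Gm_j\in\{\id,\,s_j\pa_j^1\}$ interact. Corollary~\ref{psec} then follows at once, since a $\cP$-projective resolution equipped with its faces, the pseudodegeneracies of (a), and the pseudoconnections of (b) is exactly an augmented pseudocubical object with pseudoconnections.
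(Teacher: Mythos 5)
Your proposal is correct and takes essentially the same route as the paper: the corollary is deduced there from Theorem~\ref{resdeg}, whose proof is exactly the inductive construction you outline --- assembling the compatible systems $\psi_i^\al$ (resp.\ $\lb_{ij}^\ve$), invoking the universal property of the cubical kernel $K_{n+1}$ to get a map into it, and lifting along the $\cP$-epimorphism $e_{n+1}$ using $P_n\in\cP$.
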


\begin{proof}[Proof of Theorem \ref{resdeg}] (a) As
$\xymatrix{K_1\ar@<0.5ex>[r]^-{k_1^0}\ar@<-0.5ex>[r]_-{k_1^1}&P_0}$
is a kernel pair of $P_0\xto\pa A$, there is $\vf_1:P_0\to K_1$ such
that $k_1^0\vf_1=\id$ and $k_1^1\vf_1=\id$. On the other hand, since
$P_0$ is $\cP$-projective and $e_1$ is $\cP$-epimorphic,
$e_1s_1=\vf_1$ for some $s_1:P_0\to P_1$ and we have
$\pa_1^0s_1=k_1^0e_1s_1=k_1^0\vf_1=\id$ and
$\pa_1^1s_1=k_1^1e_1s_1=k_1^1\vf_1=\id$. Thus $s_1:P_0\to P_1$ is
constructed. Inductively, suppose given $s_1:P_0\to P_1$,
$s_1,s_2:P_1\to P_2,\dots,s_1,\dots,s_n:P_{n-1}\to P_n$ with the
required properties. Fix $j$, $1\le j\le n+1$, and define
$\psi_{ij}^\ve:P_n\to P_n$, $1\le i\le n+1$, $\ve\in\{0,1\}$, by
$$
\psi_{ij}^\ve=\begin{cases}
\id\quad &i=j,\\
s_{j-1}\pa_i^\ve\quad&i<j,\\
s_j\pa_{i-1}^\ve\quad& i>j.
\end{cases}
$$
Using the induction assumption, one checks that $\pa_i^\ve\psi_{mj}^\al=\pa_{m-1}^\al\psi_{ij}^\ve$ for $1\le i<m\le n+1$, $\ve,\al\in\{0,1\}$. Then there  exists $\vf_j:P_n\to K_{n+1}$ such that $k_i^\ve\vf_j=\psi_{ij}^\ve$, $1\le i\le n+1$, $\ve\in\{0,1\}$. Since $P_n$ is
$\cP$-projective and $e_{n+1}$ is $\cP$-epimorphic, there is $s_j:P_n\to P_{n+1}$ with $e_{n+1}s_j=\vf_j$. According to this, for $i=1,\dots,n+1$ and $\ve\in\{0,1\}$, one has
$$
\pa_i^\ve s_j=k_1^\ve e_{n+1}s_j=k_i^\ve\vf_j=\psi_{ij}^\ve=\begin{cases}
\id\quad &i=j,\\
s_{j-1}\pa_i^\ve\quad&i<j,\\
s_j\pa_{i-1}^\ve\quad& i>j.
\end{cases}
$$
Thus we have constructed $s_1,\dots,s_{n+1}:P_n\to P_{n+1}$ with the desired properties.

(b) Define $\lb_1^0,\lb_2^0,\lb_1^1,\lb_2^1:P_1\to  P_1$ by  $\lb_1^0=\id$, $\lb_2^0=\id$, $\lb_1^1=s_1\pa_1^1$ and $\lb_2^1=s_1\pa_1^1$. One verifies that $\pa_1^\ve\lb_2^\al=\pa_1^\al\lb_1^\ve$, $\ve\in\{0,1\}$. Hence there exists $\mu_1:P_1\to K_2$ such that $k_i^\ve\mu_1=\lb_i^\ve$, $i=1,2$, $\ve\in\{0,1\}$. As $P_1$ is $\cP$-projective and $e_2$ is $\cP$-epimorphic, $\mu_1=e_2\Gm_1$ for some $\Gm_1:P_1\to P_2$, and we have
$$
\pa_i^\ve \Gm_1=k_i^\ve e_2\Gm_1=k_i^\ve\mu_1=\lb_i^\ve=\begin{cases}
\id\quad &i=1,2,\;\;\ve=0,\\
s_1\pa_1^1\quad&i=1,2,\;\;\;\ve=1.
\end{cases}
$$
Next, assume that $\Gm_1:P_1\to P_2$, $\Gm_1,\Gm_2:P_2\to P_3,\dots,\Gm_1,\dots,\Gm_{n-1}:P_{n-1}\to P_n$ with the required properties are constructed, and for any fixed $j$, $1\le j\le n$, define $\lb_{ij}^\ve:P_n\to P_n$, $1\le i\le n+1$, $\ve\in\{0,1\}$, as follows:
$$
\lb_{ij}^\ve=\begin{cases}\Gm_{j-1}\pa_i^\ve\quad& i<j,\;\;\ve\in\{0,1\},\\
\id\quad &i=j,\;j+1,\;\;\ve=0,\\
s_j\pa_j^\ve\quad&i=j,\;j+1,\;\;\ve=1,\\
\Gm_j\pa_{i-1}^\ve\quad& i>j+1,\;\;\ve\in\{0,1\}.
\end{cases}
$$
By the induction assumption, $\pa_i^\ve\lb_{mj}^\al=\pa_{m-1}^\al\lb_{ij}^\ve$, $1\le i<m\le n+1$, $\ve,\al\in\{0,1\}$. Consequently, there is $\mu_j:P_n\to K_{n+1}$ such that, $k_i^\ve\mu_j=\lb_{ij}^\ve$, $1\le i\le n+1$, $\ve\in\{0,1\}$.

Since $P_n$ is $\cP$-projective and $e_{n+1}$ is $\cP$-epimorphic, there exists $\Gm_j:P_n\to P_{n+1}$ with $e_{n+1}\Gm_j=\mu_j$. Then, for $i=1,\dots,n+1$ and $\ve\in\{0,1\}$, one has
$$
\pa_i^\ve \Gm_j=k_i^\ve e_{n+1}\Gm_j=k_i^\ve\mu_j=\lb_{ij}^\ve\begin{cases}\Gm_{j-1}\pa_i^\ve\quad&i<j,\;\;\ve\in\{0,1\},\\
\id\quad&i=j,\;j+1,\;\;\ve=0,\\
s_j\pa_j^\ve\quad&i=j,\;j+1,\;\;\ve=1,\\
\Gm_j\pa_{i-1}^\ve\quad&i>j+1,\;\;\ve\in\{0,1\}.
\end{cases}
$$
Thus we have constructed $\Gm_1,\dots,\Gm_n:P_n\to P_{n+1}$
satisfying the desired properties.
\end{proof}

Let $\sA$ be an abelian category. It is shown in [16] that the
functor $N:c(\sA)\to Ch_{\ge 0}(\sA)$ sends
cubically homotopic morphisms to chain homotopic  morphisms. The
proof, which we repeat here because of the completeness, shows that
in fact one has

% 3.6
\begin{prop}\label{prechN}
Let $X$ be a pseudocubical object and $Y$ a precubical object in an abelian category
$\sA$, and $f,g:X\to Y$ precubical morphisms. If $f$ and
$g$ are precubically homotopic $(${\rm see Theorem} $3.3)$, then
$N(f)$ and $N(g):N(X)\to N(Y)$ are chain homotopic.
\end{prop}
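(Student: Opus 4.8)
The plan is to produce an explicit chain homotopy $D_n : N_n(X) \to N_{n+1}(Y)$ between $N(f)$ and $N(g)$, built out of the precubical homotopy morphisms $h_n : X_n \to Y_{n+1}$ supplied by the hypothesis together with the pseudodegeneracies $s_i$ of $X$. Recall that a precubical homotopy satisfies $\pa_1^0 h_n = f_n$, $\pa_1^1 h_n = g_n$, and $\pa_i^\ve h_n = h_{n-1}\pa_{i-1}^\ve$ for $1 < i \le n+1$. The natural guess, mimicking the cubical prism decomposition, is to take $D_n$ to be (an alternating sum of) the composite $h_n$ restricted and corrected so that its image lands in $N_{n+1}(Y) = \bigcap_{i=1}^{n+1}\Ker(\pa_i^1)$. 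Since $h_n$ itself does not carry $N_n(X)$ into $N_{n+1}(Y)$ — the identities $\pa_i^\ve h_n = h_{n-1}\pa_{i-1}^\ve$ for $i\ge 2$ kill things on $N_n(X)$ only for the $\pa_i^0$ faces via the $h_{n-1}$ recursion, while the offending face is $\pa_1^1 h_n = g_n$ — one must precompose/modify using the $s_i$ to absorb the bad term.

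First I would fix the formula. Working on $x \in N_n(X)$, so $\pa_i^1 x = 0$ for $1 \le i \le n$, I would set
\begin{align*}
D_n &= \sum_{i=1}^{n+1}(-1)^{?}\, (\text{terms built from } h \text{ and } s)\big|_{N_n(X)},
\end{align*}
and then compute $\pa_i^1 D_n$ for each $i$ to check the image lies in $N_{n+1}(Y)$, and compute $\pa = \sum(-1)^{i+1}\pa_i^0$ applied to $D_n$ together with $D_{n-1}\pa$ to match $N_n(f) - N_n(g) = f_n|_{N_n} - g_n|_{N_n}$. The role of the pseudodegeneracies is exactly as in the simplicial Moore-complex homotopy (Theorem 22.1 of [15], already invoked in Section 2): one uses $s_i$ to split off the components of $h_n$ that fail to vanish under the $\pa^1$ faces, producing a well-defined map into the normalized complex. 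This is why the hypothesis only requires $X$ pseudocubical (it needs $s_i$) while $Y$ may be merely precubical (the $h_n$ land in $Y$ with no structure needed there beyond faces).

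The key steps, in order: (1) write down the explicit candidate $D_n : N_n(X) \to N_{n+1}(Y)$ as an alternating sum involving $h_\bullet$ and the $s_\bullet$ of $X$, modeled on the classical Moore homotopy; (2) verify $\Img D_n \subseteq N_{n+1}(Y)$ by checking $\pa_j^1 D_n = 0$ on $N_n(X)$ for all $1 \le j \le n+1$, using the precubical homotopy identities, the pseudocubical identities $\pa_i^\al s_j = \dots$, and the vanishing $\pa_i^1 x = 0$; (3) verify the chain-homotopy equation $\pa D_n + D_{n-1}\pa = N_n(f) - N_n(g)$ by expanding $\pa = \sum_{i=1}^{n+1}(-1)^{i+1}\pa_i^0$, substituting the homotopy identities for the $\pa^0$ faces of $h$ and $s$, and collecting telescoping cancellations, with the surviving boundary terms $\pa_1^0 h_n = f_n$ and $\pa_1^1 h_n = g_n$ giving the right-hand side; (4) note naturality/base cases $n=0$ separately.

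The main obstacle I expect is step (3): bookkeeping the alternating signs and the telescoping cancellations when $\pa$ is applied to $D_n$, since each term of $D_n$ contributes several faces and the pseudocubical identity $\pa_i^0 s_j$ has three cases, so one gets a large double sum that must be shown to collapse to $f_n - g_n$. Getting the correct index shifts in the $D_{n-1}\pa$ term to cancel against the cross terms is the delicate part. The safest route is to import the known simplicial computation: apply $\wh I$, $\wh J$, or rather recall that the cubical normalization $N$ factors compatibly (as in Section 2) and that the simplicial Moore homotopy is sign-for-sign available, so the cubical statement follows by transporting the simplicial prism argument through the identities $\pa_i^\ve h_n = h_{n-1}\pa_{i-1}^\ve$. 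If a direct verification is preferred, I would organize step (3) by first treating the terms of $D_n$ where the degeneracy index is extreme (producing identities and the $f_n$, $g_n$ boundary pieces) and then showing all remaining terms cancel in adjacent pairs.
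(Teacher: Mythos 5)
Your proposal correctly isolates the one obstruction (on $N_n(X)$ all the faces $\pa_i^1h_n=h_{n-1}\pa_{i-1}^1$, $i\ge2$, vanish, and the only offending face is $\pa_1^1h_n=g_n$) and the right strategy (subtract a degeneracy correction so the map lands in $N_{n+1}(Y)$), but the proof itself is not there: the candidate homotopy is left as a placeholder with an undetermined sign and unspecified terms, and steps (2)--(3) are only described, never carried out. Moreover, your guess at the shape of the formula is off: no alternating sum over the $s_i$, no ``large double sum,'' and no delicate telescoping are needed. Since exactly one face is bad, a single correction term suffices; the paper takes
$$
t_n=(h_n-g_{n+1}s_1)\nu_n:N_n(X)\to N_{n+1}(Y),
$$
where $\nu_n:N_n(X)\hookrightarrow X_n$ is the inclusion. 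Then $\pa_1^1(h_n-g_{n+1}s_1)=g_n-g_n\pa_1^1s_1=0$, and for $i\ge2$ both $\pa_i^1h_n=h_{n-1}\pa_{i-1}^1$ and $\pa_i^1g_{n+1}s_1=g_ns_1\pa_{i-1}^1$ vanish on $N_n(X)$, so $t_n$ is well defined. In $\pa t_n+t_{n-1}\pa$ the $i$-th summand of $\pa t_n$ for $i\ge2$ equals $(-1)^{i+1}(h_{n-1}-g_ns_1)\pa_{i-1}^0\nu_n$ and cancels the $(i-1)$-st summand of $t_{n-1}\pa$ outright, leaving only $\pa_1^0(h_n-g_{n+1}s_1)\nu_n=(f_n-g_n)\nu_n$. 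That two-line computation is the whole proof, and it is the piece your write-up omits.

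Two further cautions. First, your fallback of ``importing the simplicial Moore homotopy through $\wh I$, $\wh J$ and $\ol N$'' does not go through as stated: $Y$ is only precubical, a precubical homotopy is a single morphism per degree while a presimplicial homotopy requires many, and the transport of $h$ along $\ol N$ is not automatic; this route would need its own argument, which defeats the purpose. Second, a small slip: you say the recursion $\pa_i^\ve h_n=h_{n-1}\pa_{i-1}^\ve$ ``kills things only for the $\pa_i^0$ faces''; for membership in $N_{n+1}(Y)$ what matters is the vanishing of the $\pa_i^1$ faces for $i\ge2$, which is exactly what that recursion gives on $N_n(X)$.
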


\begin{proof}
Let $h=(h_n:X_n\to Y_{n+1})_{n\ge0}$ be a precubical homotopy from $f$ to $g$, and let $\nu_n$ denote the canonical monomorphism from $N_n(X)$ to $X_n$, $n\ge 0$. One checks that $\pa_i^1(h_n-g_{n+1}s_1)\nu_n=0$, $1\le j\le n+1$, $n\ge 0$. Consequently, we have morphisms
$$
t_n=(h_n-g_{n+1}s_1)\nu_n:N_n(X)\to N_{n+1}(Y),\quad n\ge 0.
$$
Clearly,
$$
\pa t_0=\pa_1^0(h_0-g_1s_1)\nu_0=(f_0-g_0)\nu_0=N_0(f)-N_0(g).
$$
Further, for all $n\ge 1$, we have
\all
\begin{align*}
\pa t_n&+t_{n-1}\pa=\left(\sum_{i=1}^{n+1}(-1)^{i+1}\pa_i^0\right)(h_n-g_{n+1}s_1)\nu_n+(h_{n-1}-g_ns_1)\left(\sum_{i=1}^n(-1)^{i+1}\pa_i^0\right)\nu_n\\
&=f_n\nu_n-g_n\nu_n+\left(\sum_{i=2}^{n+1}(-1)^{i+1}\pa_i^0\right)
(h_n-g_{n+1}s_1)\nu_n+\left(\sum_{i=1}^n(-1)^{i+1}\pa_{i+1}^0\right)(h_n-g_{n+1}s_1)\nu_n\\
&=f_n\nu_n-g_n\nu_n+\left(\sum_{i=2}^{n+1}(-1)^{i+1}\pa_i^0\right)(h_n-g_{n+1}s_1)
\nu_n+\left(\sum_{i=2}^{n+1}(-1)^i\pa_i^0\right)(h_n-g_{n+1}s_1)\nu_n\\
&=f_n\nu_n-g_n\nu_n=N_n(f)-N_n(g).
\end{align*}
Hence $t=(t_n:N_n(X)\to N_{n+1}(Y))_{n\ge0}$ is a chain homotopy from $N(f)$ to $N(g)$.
\end{proof}

This proposition together with Theorem 2.4 implies

% 3.7
\begin{prop}\label{prechM}
Let $X$ and $Y$ be pseudocubical objects with pseudoconnections in
an abelian category $\sA$, and $f,g:X\to Y$ precubical
morphisms. If $f$ and $g$ are precubically homotopic, then $M(f)$
and $M(g):M(X)\to M(Y)$ are chain homotopic.\hfill\qed
\end{prop}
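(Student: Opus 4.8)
The plan is to deduce this from Proposition~\ref{prechN} by transporting the chain homotopy along the natural homotopy equivalences supplied by the cubical analog of the Moore theorem. Since $X$ is pseudocubical and $Y$ is precubical, Proposition~\ref{prechN} applies verbatim: a precubical homotopy $h=(h_n:X_n\to Y_{n+1})$ from $f$ to $g$ produces a chain homotopy $t=(t_n:N_n(X)\to N_{n+1}(Y))_{n\ge0}$ with $\pa t_n+t_{n-1}\pa=N_n(f)-N_n(g)$ for all $n\ge0$ (with $t_{-1}=0$).

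Next I would invoke Theorem~\ref{normtheo} together with the remark following it. Because $X$ and $Y$ are pseudocubical objects with pseudoconnections, the natural monomorphisms $i_X:M(X)\to N(X)$ and $i_Y:M(Y)\to N(Y)$ are chain homotopy equivalences, and $i_Y$ has a homotopy inverse $r_Y:N(Y)\to M(Y)$ with $r_Yi_Y=\id_{M(Y)}$. Naturality of $i$ with respect to the precubical morphisms $f$ and $g$ gives $N(f)i_X=i_YM(f)$ and $N(g)i_X=i_YM(g)$; composing on the left with $r_Y$ and using $r_Yi_Y=\id_{M(Y)}$ yields
\[
M(f)=r_Y\,N(f)\,i_X,\qquad M(g)=r_Y\,N(g)\,i_X.
\]

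The last step is the routine observation that chain homotopy of chain maps is preserved under pre- and post-composition by chain maps. Since $i_X:M(X)\to N(X)$ and $r_Y:N(Y)\to M(Y)$ are chain maps, setting $\sg_n:=r_Y\,t_n\,i_X:M_n(X)\to M_{n+1}(Y)$ and composing the identity $\pa t_n+t_{n-1}\pa=N_n(f)-N_n(g)$ on the left by $r_Y$ and on the right by $i_X$ (moving $r_Y$ and $i_X$ past the differentials, which is legitimate as they are chain maps) gives $\pa\sg_n+\sg_{n-1}\pa=r_Y\bigl(N_n(f)-N_n(g)\bigr)i_X=M_n(f)-M_n(g)$. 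Hence $\sg=(\sg_n)_{n\ge0}$ is a chain homotopy from $M(f)$ to $M(g)$. I do not expect any genuine obstacle here: all the substantive work has already been packaged into Proposition~\ref{prechN} and Theorem~\ref{normtheo}, and one only needs to be careful that $M$ and $N$ are functorial on precubical morphisms while $i$ is natural, so that the ``correction'' composites $r_Y N(f) i_X$ and $r_Y N(g) i_X$ really coincide with $M(f)$ and $M(g)$.
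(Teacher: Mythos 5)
Your argument is correct and is precisely the route the paper takes: the paper derives Proposition~\ref{prechM} by combining Proposition~\ref{prechN} with Theorem~\ref{normtheo} (and the remark that $i$ has a natural homotopy inverse $r$ with $ri=\id$), and your writeup simply spells out the transport of the chain homotopy via $\sg_n=r_Y\,t_n\,i_X$, which is exactly what is intended.
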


We are now ready to introduce cubical derived functors.

Let $\sA$ be a category with finite limits and a projective
class $\cP$, ${\mathscr B}$ an abelian category, and $T:\sA\to{\mathscr B}$ an arbitrary (covariant) functor. We construct
the left cubical derived functors $L_n^\cub T:\sA\to{\mathscr B}$, $n\ge 0$, of $T$ as follows. If $A\in\sA$, choose (once and for all) a $\cP$-projective resolution $P\to
A$ and define
$$L_n^\cub T(A)=H_n\left(N(T(P))\right),\quad n\ge 0.$$
Theorem 3.3 and Propositions 3.4(a) and 3.6 show that the objects  $L_n^\cub T(A)$ are independent (up to natural isomorphism) of the
resolution chosen (if $P'\to A$ is a second $\cP$-projective
resolution and $f:P\to P'$ a precubical morphism extending $\id:A\to
A$, then $H_n(N(T(f))):H_n(N(T(P)))\to H_n(N(T(P')))$ are
isomorphisms). Moreover, by the same statements, it is immediately
clear that $L_n^\cub T$ are defined on morphisms and are functors
from $\sA$ to ${\mathscr B}$. Besides, functoriality in the
variable $T$ is obvious.

Similarly, in view of Theorem 3.3, Corollary 3.5 and Proposition
3.7, one can define functors $\wt{L}{\,}_n^\cub T:\sA\to{\mathscr B}$ by
$$\wt{L}{\,}_n^\cub T(A)=H_n\left(M(T(P))\right),\quad n\ge 0. $$
It follows from Theorem 2.4 that in fact there are isomorphisms
$$L_n^\cub T(A)\cong\wt{L}{\,}_n^\cub T(A),\quad n\ge 0, $$
which are natural in $A$ and in $T$.

\begin{remk}
As one sees the construction of $L_n^\cub T$ essentially uses \ref{resdeg}(a), and similarly $\wt L_n^\cub T$ essentially uses \ref{psec}. This contrasts with the fact that the construction of the derived functors by Tierney and Vogel does not use existence of pseudodegeneracies in $\cP$-projective presimplicial resolutions. On the other hand, as shown in [16], the functor $C$ (see Section 1) sends precubically homotopic morphisms of precubical objects to chain homotopic morphisms (cf. Propositions \ref{prechN} and \ref{prechM}). This together with Theorem \ref{precm} allows us to conclude that one does not need Proposition \ref{resdeg}(a) to prove that the functors
$$
{\bar L}_n^\cub T(A)=H_n(C(T(P))),\ n\ge0,
$$
where $P$ is a $\cP$-projective precubical resolution augmented over $A\in\sA$, are correctly defined. But in this way one obtains ``bad'' derived functors by the following reason. One can easily see that ${\bar L}_n^\cub T(Q)\cong T(Q)$ for any $Q\in\cP$ and any $n\ge0$. Thus in general the higher ($n>0$) derived functors ${\bar L}_n^\cub T$ do not vanish on $\cP$-projectives, i.~e., the crucial property of derived functors is not satisfied. In particular, there is no chance for ${\bar L}_n^\cub T$ to coincide with the classical derived functors for additive functors $T$ between abelian categories. At the same time, the derived functors $L_n^\cub T$ and $\wt L_n^\cub T$ certainly vanish on $\cP$-projectives for $n>0$, and in particular coincide with the classical derived functors for additive $T$, as we will show below in Theorem \ref{coinc}.
\end{remk}

\section{The case of an additive functor between abelian categories}

Suppose that $\sA$ is an abelian category and
$T:\sA\to{\mathscr B}$ an additive functor. Our aim is to compare $L_n^\cub
T$ $(n\ge 0)$ with the derived functors of $T$ in the sense
of Eilenberg-Moore [7].

% 4.1
\begin{prop}
Let $\sA$ be an abelian category with a projective class
$\cP$ and suppose that $\xymatrix{X\ar[r]^-{\pa}&A}$ is a precubical
object in $\sA$ augmented over $A\in\sA$. If
$\xymatrix{X\ar[r]^-{\pa}&A}$ is $\cP$-exact, then its augmented
Moore chain complex
$$
M(X):\cdots\to M_n(X)\xto{(-1)^{n+1}\pa_n^0}M_{n-1}(X)\to\dots\to
M_1(X)\xto{\pa_1^0}M_0(X)\xto\pa A\to0
$$
is $\cP$-exact in the sense of Eilenberg-Moore $[7]$. That is, the sequence of abelian groups
\begin{multline*}
\cdots\to\Hom_{\sA}(Q,M_n(X))\xto{d_n}\Hom_{\sA}(Q,M_{n-1}(X))\to\dots\\
\dots\to\Hom_{\sA}(Q,M_0(X))\xto{d_0}\Hom_{\sA}(Q,A)\to0,
\end{multline*}
where  $d_0=\Hom_{\sA}(Q,\pa)$ and
$d_n=\Hom_{\sA}(Q,(-1)^{n+1}\pa_n^0)$ $(n\ge 1)$, is exact for any
$Q\in\cP$.
\end{prop}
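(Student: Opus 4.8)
The plan is to apply $\Hom_\sA(Q,-)$ for a fixed $Q\in\cP$, reducing the statement to a combinatorial claim about augmented precubical abelian groups, and then to establish that claim by a direct degree-by-degree argument using the surjectivity of the maps $e_n$ and $\pa$.

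First I would fix $Q\in\cP$ and apply the additive, left-exact functor $\Hom_\sA(Q,-)\colon\sA\to\mathbf{Ab}$ to the whole factorization diagram of $X\xto{\pa}A$. Writing $Y=\Hom_\sA(Q,X)$ and $C=\Hom_\sA(Q,A)$, this is an augmented precubical abelian group over $C$. Since $\Hom_\sA(Q,-)$ preserves finite limits, $\Hom_\sA(Q,K_1)$ (with the induced maps) is the kernel pair of $\Hom_\sA(Q,\pa)$ and $\Hom_\sA(Q,K_n)$ is a cubical kernel of the faces $Y_{n-1}\to Y_{n-2}$ for $n\ge2$; moreover $\Hom_\sA(Q,e_n)$ and $\Hom_\sA(Q,\pa)$ are surjective because $e_n$ and $\pa$ are $\cP$-epimorphic. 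As $\Hom_\sA(Q,-)$ is additive and commutes with kernels and intersections of subobjects, one gets $\Hom_\sA(Q,M_n(X))=M_n(Y)$ compatibly with the differentials, so that $\Hom_\sA(Q,M(X))\to C$ is precisely the augmented Moore complex of $Y\xto{\pa}C$. Thus it suffices to prove: if $Y\xto{\pa}C$ is an augmented precubical abelian group with $\pa$ surjective and with each $e_n\colon Y_n\to K_n$ surjective $(n\ge1)$, then the complex $\dots\to M_n(Y)\xto{(-1)^{n+1}\pa_n^0}M_{n-1}(Y)\to\dots\to M_0(Y)\xto{\pa}C\to0$ is exact.

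For the reduced claim I would argue as follows. Exactness at $C$ is just surjectivity of $\pa$. For exactness at $M_0(Y)=Y_0$: the inclusion $\Img(\restr{\pa_1^0}{M_1(Y)})\subseteq\Ker\pa$ follows from $\pa\pa_1^0=\pa\pa_1^1$; conversely, given $y\in\Ker\pa$, the pair $(y,0)$ satisfies $\pa y=\pa 0$, hence determines an element $\kappa$ of the kernel pair $K_1$ with $k_1^0\kappa=y$ and $k_1^1\kappa=0$, and lifting $\kappa$ along the surjection $e_1$ yields $w\in Y_1$ with $\pa_1^1w=0$ (so $w\in M_1(Y)$) and $\pa_1^0w=y$. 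For exactness at $M_n(Y)$ with $n\ge1$: let $y\in M_n(Y)$ be a cycle, i.e.\ $\pa_n^0y=0$. Since $M_n(Y)=\bigcap_{i=1}^n\Ker\pa_i^1\cap\bigcap_{i=1}^{n-1}\Ker\pa_i^0$, the cycle condition upgrades this to $\pa_i^\al y=0$ for \emph{all} $1\le i\le n$, $\al\in\{0,1\}$, i.e.\ all faces of $y$ vanish. Define a family $(z_i^\al)_{1\le i\le n+1,\ \al\in\{0,1\}}$ in $Y_n$ by $z_{n+1}^0=y$ and $z_i^\al=0$ otherwise; a short case check, using that all faces of $y$ vanish, shows $\pa_i^\om z_j^\al=\pa_{j-1}^\al z_i^\om$ for $1\le i<j\le n+1$, $\om,\al\in\{0,1\}$, so $(z_i^\al)$ determines $\kappa\in K_{n+1}$ with $k_i^\al\kappa=z_i^\al$. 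Lifting $\kappa$ along the surjection $e_{n+1}$ produces $w\in Y_{n+1}$ with $\pa_i^\al w=z_i^\al$; then $\pa_i^1w=0$ $(1\le i\le n+1)$ and $\pa_i^0w=0$ $(1\le i\le n)$, so $w\in M_{n+1}(Y)$, while $\pa_{n+1}^0w=y$. Hence $y$ is a boundary (up to sign), which completes the reduced claim and thereby the proposition.

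The argument has no serious obstacle; the single point needing care is the bookkeeping in the last step. The object $M_n(Y)$ contains $\Ker\pa_i^1$ for $i\le n$ but $\Ker\pa_i^0$ only for $i\le n-1$, so it is exactly the cycle condition $\pa_n^0y=0$ that makes the remaining face vanish and lets the compatibilities $\pa_i^\om z_j^\al=\pa_{j-1}^\al z_i^\om$ hold; without it the family $(z_i^\al)$ would not land in the cubical kernel $K_{n+1}$. Everything else is a routine unwinding of the definitions of cubical kernel, $\cP$-epimorphism, and $M_n$, together with the fact that $\Hom_\sA(Q,-)$ preserves finite limits.
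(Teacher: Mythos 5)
Your proposal is correct and is essentially the paper's own argument: applying $\Hom_{\sA}(Q,-)$ and chasing elements of $\Hom_{\sA}(Q,X_n)$ is the same as the paper's direct manipulation of morphisms $Q\to X_n$, and the key step (extending a cycle $y$ to the family with $z_{n+1}^0=y$ and all other entries zero, passing through the cubical kernel $K_{n+1}$, and lifting along the $\cP$-epimorphism $e_{n+1}$) is exactly the paper's construction. The only cosmetic difference is that you make the reduction to abelian groups explicit, whereas the paper phrases the universal property of $K_{n+1}$ and the $\cP$-epimorphy of $e_{n+1}$ directly in terms of morphisms out of $Q$.
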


\begin{proof}
Evidently, $d_nd_{n+1}=0$, $n\ge 0$. Besides, since $\pa:X_0\to A$
is $\cP$-epimorphic, $d_0$ is surjective. Let $g:Q\to X_0=M_0(X)$
be an $\sA$-morphism with $d_0(g)=0$ and consider the
diagram
$$
\xymatrix@L=.2ex@M=2ex{
&&Q\ar@{-->}[ddll]_-{g'}\ar@{-->}@`{(-60,-20)}[dddl]_-\varphi\ar@<-1ex>[dd]_-{g}\ar@<1ex>[dd]^-0\\
\\
X_1\ar@<0.8ex>[rr]^-{\pa_1^0}\ar@<-0.9ex>[rr]_-{\pa_1^1}
\ar[dr]_-{e_1}&&X_0\ar[r]^-\pa&A,\\
&K_1\ar@<1ex>[ur]^-{k_1^0}\ar@<-1ex>[ur]_-{k_1^1} }
$$
where $\xymatrix{K_1\ar@<0.5ex>[r]^-{k_1^0}\ar@<-0.5ex>[r]_-{k_1^1}&X_0}$
is a kernel pair of $\pa:X_0\to A$, and $e_1$ is $\cP$-epimorphic.
As $\pa g=0$, there exists a unique $\vf:Q\to K_1$ such that
$k_1^0\vf=g$ and  $k_1^1\vf=0$. But $\vf=e_1g'$ for some $g':Q\to
X_1$, and we have $\pa_1^0 g'=k_1^0e_1g'=k_1^0\vf=g$ and  $\pa_1^1
g'=k_1^1e_1g'=k_1^1\vf=0$. It then follows from the construction of
$M(X)$ that $g'=j_1 g''$, where $j_1$ denotes the inclusion
$M_1(X)\hookrightarrow X_1$ and $g''$ is a  uniquely defined
$\sA$-morphism from $Q$ to $M_1(X)$. Clearly,
$g=\pa_1^0g'=\pa_1^0j_1g''=\pa_1^0g''=d_1(g'')$. Thus the sequence
is exact at $\Hom_{\sA}(Q,M_0(X))$.

Now assume that $n>0$ and $f:Q\to M_n(X)$ is an $\sA$-morphism with
$d_n(f)=0$. Denote the inclusion $M_n(X)\hookrightarrow X_n$ by
$j_n$ and consider the diagram
$$
\xymatrix@L=.2ex@M=2ex{
&&Q\ar@{-->}[ddll]_-{f'}\ar@{-->}@`{(-100,-20)}[dddl]_-\varphi\ar@<-1.4ex>[dd]_-{\varphi_1^0}
\ar@<1ex>[dd]^-{\varphi_{n+1}^1}&&\\
&&{{}^{{}^{\dots}}}\\
X_{n+1}\ar@<1.6ex>[rr]^-{\pa_1^0}\ar@<-.9ex>[rr]_-{\pa_{n+1}^1}
\ar[dr]_-{e_{n+1}}%
&{{}^{{}^{\vdots}}}
&X_n\ar@<1.6ex>[rr]^-{\pa_1^0}\ar@<-.9ex>[rr]_-{\pa_n^1}%
&{{}^{{}^{\vdots}}}
&X_{n-1}\\
&K_{n+1}\ar@<1.1ex>[ur]^-{k_1^0}\ar@<-1.9ex>[ur]_-{k_{n+1}^1}\ar@{}[ur]|-\ddots
}
$$
where $(k_1^0,\dots,k_{n+1}^0,k_1^1,\dots,k_{n+1}^1)$ is a cubical
kernel of $(\pa_1^0,\dots,\pa_n^0,\pa_1^1,\dots,\pa_n^1)$, $e_{n+1}$
is $\cP$-epi\-mor\-phic, and
$$\vf_i^\ve=\begin{cases}j_nf\quad&i=n+1,\;\;\ve=0,\\
0\quad&(i,\ve)\neq(n+1,0).\end{cases}$$
One checks that
$$\pa_i^\ve\vf_j^\al=\pa_{j-1}^\al\vf_i^\ve,\quad 1\le i<j\le n+1,\quad \ve,\al\in\{0,1\}.$$
Consequently, there exists a unique $\vf:Q\to K_{n+1}$ such that
$k_i^\ve\vf=\vf_i^\ve$, $1\le i\le n+1$,  $\ve\in\{0,1\}$. Since $e_{n+1}$ is $\cP$-epimorphic and $Q\in\cP$, one has $\vf=e_{n+1}f'$ for some $f':Q\to X_{n+1}$. But then
$$\pa_i^\ve f'=\begin{cases}j_nf\quad&(i,\ve)=(n+1,0),\\
0\quad&(i,\ve)\neq(n+1,0).\end{cases}$$ From this we conclude, by
the construction of $M(X)$, that $f'=j_{n+1}f''$ for a uniquely
defined $\sA$-morphism $f'':Q\to M_{n+1}(X)$. Clearly,
$j_nf=\pa_{n+1}^0f'=\pa_{n+1}^0j_{n+1}f''=j_n\pa_{n+1}^0f''$, whence
$f=\pa_{n+1}^0f''=(-1)^{n+2}\pa_{n+1}^0(-1)^{n+2}f''=d_{n+1}((-1)^{n+2}f'')$.
\end{proof}

Combining Proposition 4.1 with Theorem 2.4 we get

% 4.2
\begin{prop}
Let $\sA$ be an abelian category with a projective class
$\cP$ and suppose that $\xymatrix{X\ar[r]^-\pa&A}$ is a
pseudocubical object with pseudoconnections in $\sA$
augmented over  $A\in\sA$. If  $X\to A$ is $\cP$-exact,
then the augmented chain complex $\xymatrix{N(X)\ar[r]^-\pa&A}$ is
$\cP$-exact in the sense  of Eilenberg-Moore.\hfill\qed
\end{prop}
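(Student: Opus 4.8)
The plan is to transport the conclusion of Proposition 4.1 along the chain homotopy equivalence supplied by the cubical Moore theorem (Theorem \ref{normtheo}). First I would record that $M_0(X)=N_0(X)=X_0$, with the same augmentation $\pa\colon X_0\to A$ for both, and that the canonical monomorphism $i\colon M(X)\to N(X)$ is the identity in degree $0$; thus $i$ is a morphism of augmented chain complexes
$$
i\colon\bigl(M(X)\xto{\pa}A\bigr)\lra\bigl(N(X)\xto{\pa}A\bigr)
$$
covering $\id_A$. Since $X$ is a pseudocubical object with pseudoconnections, Theorem \ref{normtheo} and the remark following it provide a natural homotopy inverse $r\colon N(X)\to M(X)$ of $i$ with $ri=\id_{M(X)}$; then $r_0=r_0i_0=\id_{X_0}$, so $r$ also covers $\id_A$, and the homotopy realizing $ir\simeq\id_{N(X)}$ extends over $A$ by taking it to be $0$ on the augmentation term (in that degree the required identity is just $\id_A-\id_A=0$). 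Hence $i$ is a chain homotopy equivalence of augmented chain complexes over $A$.

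Next, for each $Q\in\cP$, I would apply $\Hom_\sA(Q,-)$. Being additive, this functor carries chain homotopies to chain homotopies, hence chain homotopy equivalences to chain homotopy equivalences; applying it to the two augmented complexes above (and composing with the shifting functor $sh$) produces a chain homotopy equivalence between
$$
\cdots\to\Hom_\sA(Q,M_n(X))\to\cdots\to\Hom_\sA(Q,M_0(X))\xto{d_0}\Hom_\sA(Q,A)\to0
$$
and
$$
\cdots\to\Hom_\sA(Q,N_n(X))\to\cdots\to\Hom_\sA(Q,N_0(X))\xto{d_0}\Hom_\sA(Q,A)\to0 .
$$
Chain homotopy equivalent complexes have isomorphic homology groups, and by Proposition 4.1 the first sequence is exact (this is exactly where $\cP$-exactness of $\xymatrix{X\ar[r]^-\pa&A}$ is used). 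Therefore the second sequence is exact for every $Q\in\cP$, which is precisely the assertion that $\xymatrix{N(X)\ar[r]^-\pa&A}$ is $\cP$-exact in the sense of Eilenberg-Moore.

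Almost everything in this argument is formal --- homotopy invariance of homology and additivity of $\Hom_\sA(Q,-)$. The one place I expect to need a line of care is the claim that a chain homotopy equivalence of the underlying (non-augmented) complexes which is the identity in degree $0$ automatically upgrades to one of the associated augmented complexes over $A$; this is the main, though minor, obstacle, and it is dispatched by the observations $r_0=\id_{X_0}$ and that the relevant homotopies may be chosen to vanish on the augmentation term.
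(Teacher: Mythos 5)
Your proof is correct and is exactly the paper's argument: the paper derives this proposition in one line by ``combining Proposition 4.1 with Theorem \ref{normtheo}'', which is precisely your transport of $\cP$-exactness along the augmented chain homotopy equivalence $i\colon M(X)\to N(X)$ after applying $\Hom_\sA(Q,-)$. Your extra care about extending the equivalence over the augmentation term is a correct filling-in of a detail the paper leaves implicit.
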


For any pseudocubical object $X$ in an abelian category $\sA$, one has $\sA$-morphisms $\sg_n^X:X_n\to X_n$ defined
by
$$\sg_0^X=\id,\;\sg_1^X=(\id-s_1\pa_1^1),\dots,\sg_n^X=(\id-s_1\pa_1^1)
\cdots(\id-s_n\pa_n^1),\dots\;.$$ It is immediate that
$\pa_j^1\sg_n^X=0$, $n>0$, $1\le j\le n$. Therefore, by the
construction of $N(X)$, each $\sg_n^X$ factors as
$$
\xymatrix@M=1ex{X_n\ar[rr]^-{\sg_n^X}\ar[dr]_-{\tau_n^X}&&X_n\\
&N_n(X)\ar@{^{(}->}[ur]_-{\nu_n^X}}
$$
where $\nu_n^X$ is the inclusion. One can easily see that $\tau_n^X\nu_n^X=\id$, i.e., $\tau_n^X$ is a retraction for each $n$ (see [16, 17]). This together with Corollary 3.5 and Proposition 4.2 gives

% 4.3
\begin{prop}\label{retr}
Let $\sA$ be an abelian category with a projective class
$\cP$ which is closed with respect to retracts. If
$\xymatrix{P\ar[r]^-\pa&A}$ is a $\cP$-projective precubical
resolution of $A\in\sA$, then the augmented chain complex
$\xymatrix{N(P)\ar[r]^-\pa&A}$ is a  $\cP$-projective resolution of
$A$ in the sense of Eilenberg-Moore $[7]$.\hfill\qed
\end{prop}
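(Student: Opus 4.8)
I would verify the two defining properties of an Eilenberg--Moore $\cP$-projective resolution of $A$ for the augmented chain complex $\xymatrix{N(P)\ar[r]^-\pa&A}$: first that each term $N_n(P)$ lies in $\cP$, and second that the augmented complex is $\cP$-exact in the Eilenberg--Moore sense. The second point is immediate: by Corollary \ref{psec}, $\xymatrix{P\ar[r]^-\pa&A}$ is an augmented pseudocubical object with pseudoconnections, and since it is a resolution it is in particular $\cP$-exact, so Proposition 4.2 applies verbatim and gives that $\xymatrix{N(P)\ar[r]^-\pa&A}$ is $\cP$-exact in the sense of Eilenberg--Moore. So the whole content of the proposition is the first point: that $N_n(P)\in\cP$ for every $n$, given that each $P_n\in\cP$ and $\cP$ is closed under retracts.

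For that I would invoke the retraction discussed just before the statement. Applied to the pseudocubical object $P$ itself, the morphisms $\sg_n^P:P_n\to P_n$ factor as $\sg_n^P=\nu_n^P\tau_n^P$ with $\tau_n^P\nu_n^P=\id_{N_n(P)}$, where $\nu_n^P:N_n(P)\hookrightarrow P_n$ is the canonical inclusion. Thus $N_n(P)$ is a retract of $P_n$: the pair $(\nu_n^P,\tau_n^P)$ exhibits it as such. Since $P_n\in\cP$ and $\cP$ is closed with respect to retracts, we conclude $N_n(P)\in\cP$ for all $n\ge0$. This uses crucially that $P$ has pseudodegeneracies (Theorem \ref{resdeg}(a)), since the operators $\sg_n^P$ and hence the retractions $\tau_n^P$ are built from the $s_i$.

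Combining the two points finishes the proof: $\xymatrix{N(P)\ar[r]^-\pa&A}$ has all terms in $\cP$ and is $\cP$-exact over $A$, hence is a $\cP$-projective resolution of $A$ in the sense of Eilenberg--Moore. I do not anticipate a serious obstacle here; the only mild subtlety is making sure the retraction $\tau_n^P$ of the earlier discussion is applicable to $P$, which it is since $P$ is by Corollary \ref{psec} a pseudocubical object (indeed with pseudoconnections), and the construction of $\tau_n^X$, $\nu_n^X$ only required $X$ to be a pseudocubical object in an abelian category. One should also note that the differential on $N(P)$ is the one induced from the face operators, so that no compatibility question with the Eilenberg--Moore differential arises beyond what Proposition 4.2 already records.
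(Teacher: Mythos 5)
Your proposal is correct and follows exactly the paper's intended argument: the paper derives this proposition directly from the retraction $\tau_n^P\nu_n^P=\id$ (giving $N_n(P)\in\cP$ by closure under retracts) together with Corollary \ref{psec} and Proposition 4.2 (giving Eilenberg--Moore $\cP$-exactness). Nothing is missing.
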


Now let $\sA$ be an abelian category with a projective
class $\cP$, ${\mathscr B}$ an abelian category, and $T:\sA\to{\mathscr B}$ an additive (covariant) functor. Then one
constructs, with respect to $\cP$, the classical left derived
functors $L_n T:\sA\to{\mathscr B}$, $n\ge 0$. On the
other hand, since any abelian category admits finite limits, we can
build $\cP$-projective precubical resolutions, and therefore can
construct the cubical left derived functors $L_n^\cub T:\sA\to{\mathscr B}$, $n\ge 0$.

% 4.4
\begin{theo}\label{coinc}
Suppose $\cP$ is closed $[7]$ or, more generally, is closed with respect to retracts. Then there are natural isomorphisms
$$
L_n^\cub T(A)\cong L_nT(A),\quad A\in\sA,\quad n\ge 0.
$$
\end{theo}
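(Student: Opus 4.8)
The plan is to compare the two homology theories by exhibiting a common chain complex that computes both. Fix $A\in\sA$ and a $\cP$-projective precubical resolution $\xymatrix{P\ar[r]^-\pa&A}$; by Corollary \ref{psec}, $\xymatrix{P\ar[r]^-\pa&A}$ is an augmented pseudocubical object with pseudoconnections. The key observation is that $L_n^\cub T(A)=H_n(N(T(P)))$ by definition, so it suffices to show that the augmented chain complex $\xymatrix{N(P)\ar[r]^-\pa&A}$ is, after applying $T$, a resolution that computes the Eilenberg-Moore derived functors $L_nT(A)$. Proposition \ref{retr} already does the bulk of this work: under the hypothesis that $\cP$ is closed with respect to retracts (which is implied by closedness in the sense of [7]), it asserts that $\xymatrix{N(P)\ar[r]^-\pa&A}$ is a $\cP$-projective resolution of $A$ in the sense of Eilenberg-Moore.

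The remaining steps are then essentially formal. First I would invoke Proposition \ref{retr} to obtain the Eilenberg-Moore $\cP$-projective resolution $\xymatrix{N(P)\ar[r]^-\pa&A}$. Since each $N_n(P)$ is a retract of $P_n\in\cP$ and $\cP$ is closed under retracts, every $N_n(P)$ lies in $\cP$; exactness in the Eilenberg-Moore sense is the content of Proposition 4.2 specialized to the $\cP$-exact resolution $P$. By the basic theory of relative derived functors [7], $L_nT(A)$ may be computed as $H_n$ of $T$ applied to any $\cP$-projective resolution of $A$, and in particular $L_nT(A)\cong H_n(T(N(P)))$. Now one needs the compatibility $T(N(P))\cong N(T(P))$ of the normalization functor with the additive functor $T$: because $N_n(P)=\bigcap_{i=1}^n\Ker(\pa_i^1)$ is a finite limit (a finite intersection of kernels), and an additive functor between abelian categories preserves finite limits of this type — or, more directly, because $N_n(P)$ is a direct summand of $P_n$ cut out by the idempotent-up-to-homotopy retraction $\tau_n^P\nu_n^P$ and $T$ preserves direct sum decompositions — we get $T(N_n(P))\cong N_n(T(P))$ naturally, compatibly with the differentials $(-1)^{i+1}\pa_i^0$. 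Hence $H_n(T(N(P)))\cong H_n(N(T(P)))=L_n^\cub T(A)$.

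Chaining these isomorphisms gives $L_n^\cub T(A)\cong H_n(T(N(P)))\cong L_nT(A)$. Naturality in $A$ follows from the comparison theorem (Theorem \ref{precm}): a morphism $A\to A'$ lifts to a precubical morphism of resolutions $P\to P'$, unique up to precubical homotopy, and all the functors and isomorphisms involved ($N$, $T$, $H_n$, and the retractions $\tau_n$) are natural, while Propositions \ref{prechN} and \ref{prechM} guarantee that the induced maps on homology do not depend on the chosen lift. Naturality in $T$ is immediate since every construction is natural in $T$.

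The main obstacle I anticipate is making precise the identification $T(N(P))\cong N(T(P))$ and checking its naturality, since $T$ is only assumed additive, not exact, so one cannot simply say ``$T$ preserves the kernels defining $N_n$''. The clean way around this is exactly the retraction $\sg_n^P=\nu_n^P\tau_n^P$ discussed before Proposition \ref{retr}: $N_n(P)$ is a retract of $P_n$ via maps that are built from the pseudodegeneracies $s_i$ and faces $\pa_i^1$, all of which are honest $\sA$-morphisms, so an additive $T$ carries this retract diagram to a retract diagram exhibiting $N_n(T(P))$ as the image of the idempotent $T(\sg_n^P)$ acting on $T(P_n)$. Everything else is bookkeeping with the Eilenberg-Moore machinery, which we are entitled to quote.
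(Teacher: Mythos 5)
Your proposal is correct and follows essentially the same route as the paper: Proposition \ref{retr} identifies $N(P)\to A$ as an Eilenberg--Moore $\cP$-projective resolution, and the isomorphism $T(N_n(P))\cong N_n(T(P))$ is obtained exactly as in the paper, from the retraction $\sg_n^P=\nu_n^P\tau_n^P$ together with $T(\sg_n^P)=\sg_n^{T(P)}$ (which uses additivity of $T$). Your first suggested justification --- that an additive functor preserves the finite limits $\bigcap\Ker(\pa_i^1)$ --- is false in general, but you correctly discard it in favour of the retraction argument, which is the one the paper uses.
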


\begin{proof}
Let  $\xymatrix@1{P\ar[r]^\pa&A}$ be a $\cP$-projective precubical
resolution of $A\in\sA$. By Proposition 3.4(a), $P$ has
pseudodegeneracy operators. Then, as noted above, we have
$\sA$-morphisms $\sg_n^P:P_n\to P_n$, $\tau_n^P:P_n\to N_n(P)$ with
$\sg_n^P=\nu_n^P\tau_n^P$ and $\tau_n^P\nu_n^P=1$. The latter gives
$T(\tau_n^P)T(\nu_n^P)=T(\tau_n^P\nu_n^P)=T(\id)=\id$. Hence
$T(\tau_n^P):T(P_n)\to T(N_n(P))$ is an epimorphism and
$T(\nu_n^P):T(N_n(P))\to T(P_n)$ is a monomorphism. Since $T$ is
additive, $T(\sg_n^P)=\sg_n^{T(P)}$. Consider the commutative
diagram
$$
\xymatrix@M=1ex{&N_n(T(P))\;\;\;\;\ar@{^{(}->}[dr]^-{\nu_n^{T(P)}}\\
T(P_n)\ar[ur]^-{\tau_n^{T(P)}}\ar[dr]_-{T(\tau_n^P)}\ar[rr]^-{T(\sg_n^P)\sg_n^{T(P)}}&&T(P_n)\;\;\;\\
&T(N_n(P))\;\;.\ar[ur]_-{T(\nu_n^P)} }$$
As $\tau_n^{T(P)}$ and  $T(\tau_n^P)$ are epimorphisms, $\Imm(T(\nu_n^P))=\Imm(\nu_n^{T(P)})$. But $\nu_n^{T(P)}$ is an inclusion and $T(\nu_n^P)$ is a monomorphism. Hence $\Imm(\nu_n^{T(P)})=N_n(T(P))$ and
$$
T(\nu_n^P):T(N_n(P))\to N_n(T(P))
$$
is an isomorphism for each $n$. One can easily see that these isomorphisms commute with the differentials and induce natural isomorphisms on the homologies. In view of Proposition \ref{retr}, $\xymatrix{N(P)\ar[r]^-\pa&A}$ is a $\cP$-projective resolution of $A$ in the sense of Eilenberg-Moore. Consequently, $L_nT(A)=H_n(T(N(P)))$. On the other hand, by the definition of $L_n^\cub T$, $L_n^\cub T(A)=H_n(N(T(P)))$. Thus we have isomorphisms
$$
L_nT(A)\cong L_n^\cub T(A),\quad n\ge0,
$$
which are natural in $A$ and in $T$.
\end{proof}

There remains an open question here. We do not know whether the cubical derived functors introduced by us coincide with the Tierney-Vogel derived functors in full generality, or at least, with the Dold-Puppe derived functors in the particular case of an arbitrary (not necessarily additive) functor on an abelian category.

\

\begin{tabular}{l}
{\sc Mathematisches Institut}\\
{\sc Universit\"at Bonn}\\
{\sc Endenicher Allee 60}\\
{\sc 53115 Bonn, Germany}\\
\\[-10pt]
\emph{E-mail address}: irpatchk@math.uni-bonn.de
\end{tabular}

\end{document}